\documentclass[11pt,reqno]{article}

\usepackage[utf8]{inputenc}
\usepackage[T1]{fontenc}
\usepackage{lmodern}
\usepackage{amsmath,amsthm,amsfonts,amssymb}
\usepackage{geometry}
\usepackage{enumitem}
\usepackage{hyperref}
\hypersetup{colorlinks=true,linkcolor=blue,citecolor=blue,urlcolor=blue}

\newtheorem{theorem}{Theorem}
\newtheorem{lemma}[theorem]{Lemma}
\newtheorem{corollary}{Corollary}
\newtheorem{proposition}[theorem]{Proposition}
\theoremstyle{remark}
\newtheorem*{remark}{Remark}
\newtheorem{exe}[theorem]{Example}
\newcommand{\C}{\mathbb C}
\newcommand{\R}{\mathbb R}
\newcommand{\D}{\mathbb D}
\newcommand{\F}{\mathcal F}
\newcommand{\G}{\mathcal G}
\newcommand{\Om}{\Omega}
\newcommand{\T}{\mathbb T}
\newcommand{\pd}{\partial}
\newcommand{\ol}{\overline}
\newcommand{\ip}[2]{\left\langle #1,#2\right\rangle}
\newcommand{\Sing}{\operatorname{Sing}}
\title{Geometric Orbital Linearization of Siegel Singularities in the Plane}
\author{Toshikazu Ito \and Bruno Sc\'ardua}
\date{}

\begin{document}
\maketitle

\begin{abstract}
We give geometric characterizations of analytically linearizable nondegenerate Siegel foliations in $\C^2$ using complex tangencies with shrinking strictly pseudoconvex hypersurfaces. Our principal result treats a fixed bounded smoothly bounded strictly pseudoconvex Reinhardt domain: in coordinates compatible with the eigendirections of the linear part, two-dimensional tangency loci along a shrinking sequence characterize analytic Siegel linearizability. For round spheres we prove a stronger statement: no a priori alignment of the Euclidean coordinate axes with the eigendirections is required; the tangency hypothesis itself forces the linear part to be unitarily diagonalizable with negative real eigenvalue ratio. We also establish a polynomial Shilov-boundary criterion on a round sphere and a logarithmic one-boundary criterion valid for arbitrary smooth strictly pseudoconvex domains, provided the embedded closed bidisc lies on the pseudoconvex side. Finally, we study the transverse-holomorphic dynamics of smooth tangency tori, including a rigidity theorem in the periodic case.
\end{abstract}
\medskip
\noindent\textbf{2020 Mathematics Subject Classification.} Primary 37F75; Secondary 32S65, 37F50, 32V40.
\tableofcontents
\section{Introduction and main results}

This work is about one of the main tools in theory of foliations and singularities, 
the effect of transverse sections in the dynamics and classification of the foliation.
In the real smooth framework this is quite classic and goes back to the works of Haefliger
(\cite{Haefliger}) and others. This line eventually led to the discovery of Reeb components and
finally to the celebrated theorem of Novikov about compact leaves in the 3-sphere (\cite{Camacho-LN}, “Novikov’s Theorem,” pp. 131–158). This was our original motivation for studying real transverse sections of holomorphic foliations. In the case of holomorphic foliations a natural question is about the existence of real hypersurfaces transverse to the foliation.  Let us be more precise.
For a smooth real hypersurface $H\subset\C^2$, a one-dimensional holomorphic foliation $\F$, and a regular point $p\in H$, we say that $\F$ is \emph{complex tangent} to $H$ at $p$ if the tangent line of $\F$ at $p$ is contained in the complex tangent line $T_p^{\C}H$. We denote the corresponding tangency set by $M(\F,H)$. A very first question is, for a holomorphic foliation in dimension two,  about the 
connection between the existence of   3-spheres transverse to the foliation and the description of the foliation inside the corresponding 4-ball. Denote by $S^3(R)\subset \mathbb C^2$ the radius $R>0$ round sphere centered at the origin. 

According to Ito-Douady, if $M(\F,S^3(R))=\emptyset$ for some $R>0$ then $\Sing(\F)\cap B^4(R)$ consists of a single point. After moving this singularity to the origin we can conclude that 
$M(\F,S^3(r))=\emptyset,\, \forall 0<r\leq R$ and the germ of $\F$ at the origin is nondegenerate and  in the Poincaré domain (\cite{Ito,Ito1996}).  Once we have this, Poincaré-Dulac theorem gives the possible analytic normal forms for the germ of $\F$ at the origin (\cite{Ilyashenko-Yakovenko}). 
Still owing to \cite{Ito}, the dynamics of $\F$ inside the 
ball $B(0;R)$ is well-known. In this work we investigate further the relation between the spherical tangency set and the analytic classification of the foliation.  If we consider a nondegenerate singularity, the remaining case is the Siegel case $\F: xdy - \lambda ydx + (...)=0, \lambda \in \mathbb R_-$. For the linear Siegel model the tangency set is a Clifford torus $T(r)=M(\F, S^3(R))$ (\S~\ref{section:siegellinear}). This is, in this rigid cartesian setting, a characterization of the linear  Siegel case (cf. Proposition~\ref{prop:cartesian}). 
For the general case of round spheres we have the following:

Given any fixed local complex coordinates $(z,w)$ centered at the origin, write
\[
 S^3_{(z,w)}(r)=\{(z,w)\in\C^2:|z|^2+|w|^2=r^2\}.
\]

\begin{theorem}[spherical geometric linearization]\label{thm:spherical-characterization}
Let $\F$ be a nondegenerate germ of one-dimensional holomorphic foliation at $0\in\C^2$. Then $\F$ is analytically linearizable of Siegel type if and only if there exist local holomorphic coordinates $(z,w)$ centered at the origin and a sequence $r_j\downarrow0$ such that
\[
 M\bigl(\F,S^3_{(z,w)}(r_j)\bigr)
\]
is a nonempty smooth real surface of dimension two for every $j$. Moreover, under the hypotheses above, in these Euclidean coordinates, after a unitary change of coordinates one has
\[
 \F:\quad \mu w\,dz+\lambda z\,dw=0,
 \qquad \lambda,\mu>0,
\]
and for every sufficiently small $r>0$ the tangency locus is the Clifford torus
\[
 |z|^2=\frac{\mu}{\lambda+\mu}r^2,
 \qquad
 |w|^2=\frac{\lambda}{\lambda+\mu}r^2.
\]
\end{theorem}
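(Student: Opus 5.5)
We write $\F$ as generated by a holomorphic vector field $X=(P,Q)$ with nondegenerate linear part $A=DX(0)$, and use the Hermitian product $\langle u,v\rangle=u_1\bar v_1+u_2\bar v_2$. The complex tangent line of $S^3_{(z,w)}(r)$ at $p$ is $\{v:\langle v,p\rangle=0\}$, so
\[
M\bigl(\F,S^3_{(z,w)}(r)\bigr)=\{p\in S^3_{(z,w)}(r):\ h(p)=0\},\qquad h(p)=\langle X(p),p\rangle .
\]
Moreover $\operatorname{Re}h=\tfrac12 X(|p|^2)$ and $\operatorname{Im}h=-\tfrac12 (iX)(|p|^2)$. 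So $h$ imposes two real conditions on a $3$-manifold, and the generic tangency set is a curve. A two-dimensional smooth locus therefore forces the real differential of $h|_{S^3(r)}$ to have rank $\le1$ along the whole locus. The plan is to extract from this degeneracy first the linear part of $\F$, and then linearizability.

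The converse direction is a direct computation. For $X=(\mu z,-\lambda w)$ we get $h=\mu|z|^2-\lambda|w|^2$, which is real. Its zero set on $S^3(r)$ is exactly the stated Clifford torus, and it is smooth because $d(h)$ and $d|p|^2$ are independent there. This also gives the final formula, once the coordinates are known to linearize $\F$ with diagonal, unitarily normalized linear part.

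For the direct implication, first rescale: set $p=ru$ with $u\in S^3(1)$. Then $h(ru)=r^2\bigl(\langle Au,u\rangle+O(r)\bigr)$, so the tangency sets are zero sets of $F_r(u)=\langle Au,u\rangle+O(r)$, a real-analytic family converging to $F_0(u)=\langle Au,u\rangle$. The rank $\le1$ condition along two-dimensional zero sets of $F_{r_j}$ should pass to the limit. Concretely, the surfaces are compact (hence, I expect, of bounded geometry), and a limit point set must be a compact subset of $\{F_0=0\}$ along which $dF_0$ has rank $\le1$. Nondegeneracy of $A$ rules out the possibility that this set is just the image of isotropic lines.

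This is a numerical range statement. If the numerical range $W(A)$, an ellipse, contains $0$ in its interior, or at a non-degenerate boundary point, then $dF_0$ has rank $2$ at generic zeros, giving a contradiction. Hence $W(A)$ is a segment through $0$. That means $e^{i\theta}A$ is Hermitian with eigenvalues of opposite signs. So $A$ is unitarily diagonalizable with negative real eigenvalue ratio, and after a unitary change and a rescaling of $X$ we may take $A=\operatorname{diag}(\mu,-\lambda)$ with $\lambda,\mu>0$. In particular $0\in\partial W(A)$ is the case to examine carefully, and I would handle it by the explicit normal form of $2\times2$ matrices up to unitary conjugacy.

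Linearizability is the main obstacle. Now $\operatorname{Re}h$ has nonvanishing differential on $S^3(r_j)$ near the Clifford torus, so the tangency locus is a torus $T_j$ on which both $X$ and $iX$ are tangent to the sphere. I would try to show that $T_j$ is then $\F$-invariant, in the sense that leaves meet $T_j$ along real curves tangent to $T_j$. This should follow by differentiating $h=0$ along $X$ on $T_j$ and using rank $\le1$. Consequently the saturation of $T_j$ cuts a transversal disc $\Sigma$ to the separatrix $\{w=0\}$ in a real-analytic Jordan curve invariant under the holonomy $f$ of that separatrix. These curves accumulate at $0\in\Sigma$ as $r_j\to0$. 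An analytic germ with invariant Jordan curves shrinking to the fixed point has its iterates uniformly bounded on the enclosed domains, so $\{f^n\}$ is normal and $f$ is analytically linearizable; the standard argument uses Riemann mappings, or Pérez-Marco's criterion. By the Mattei--Moussu theorem, a Siegel (here real, negative ratio) singularity is analytically linearizable if and only if the holonomy of a separatrix is linearizable. This gives the linearization, and the final normalization of the coordinates returns the stated Clifford tori.
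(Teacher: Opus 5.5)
\textbf{There is a genuine gap in your derivation of the linear part, at exactly the case you flag, and the tool you propose cannot close it.} Passing ``rank $\le1$'' to limit points is fine. But it only says that the accumulation set of the surfaces $T_j$ lies in the critical part of $\{F_0=0\}$. When $W(A)$ is a nondegenerate ellipse with $0\in\partial W(A)$, the entire zero set is critical. Indeed, $F_0$ descends to $S^2=S^3/S^1$ as the restriction of a rank-two real affine map $\R^3\to\C$. Its fibre over $0$ is a line tangent to $S^2$, so $\{F_0=0\}$ is a single Hopf circle along which $dF_0|_{TS^3}$ has rank exactly one. Your assertion of ``rank $2$ at generic zeros'' is therefore false in this case.

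Concretely, take $A=\begin{pmatrix}1&\sqrt2\\0&i\end{pmatrix}$, which is invertible. On $S^3(1)$ the equation $|z|^2+i|w|^2+\sqrt2\,w\bar z=0$ forces $(2|z|^2-1)^2=0$ and $w\bar z=-\frac{1+i}{2\sqrt2}$, so the zero set is one circle of critical zeros. Nothing about $A$ alone (normal form or nondegeneracy) excludes this. What is missing is an argument that two-dimensional zero sets of $F_{r_j}$ cannot collapse onto such a circle.

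The paper supplies this in Lemma~\ref{lem:no-collapse}, using real analyticity:
\begin{enumerate}
\item At a limit point $p$, after a unimodular rotation one has $d(\Im F_0)(p)\ne0$.
\item Then $\{\Im F_r=0\}$ near $p$ is a connected real-analytic graph $\Sigma_r$ over a fixed disc, converging in $C^1$ to $\Sigma_0$.
\item The zero surface of $F_{r_j}$ is an open piece of $\Sigma_{r_j}$. By the identity principle, $\Re F_{r_j}\equiv0$ on all of $\Sigma_{r_j}$.
\item In the limit, $F_0\equiv0$ on the surface $\Sigma_0$.
\end{enumerate}
Only once $\{F_0=0\}$ is known to contain a surface does your numerical-range dichotomy force $W(A)$ to be a segment with $0$ in its relative interior. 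The paper reaches the same conclusion via the Schur form, Lemma~\ref{lem:linear-spherical-alignment}.

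\textbf{Your linearization step also does not prove the statement, for two reasons.}

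First, the key claim that ``the saturation of $T_j$ cuts a transversal disc in a real-analytic Jordan curve invariant under the holonomy'' is unsupported. That characteristic curves of $T_j$ lie in leaves is automatic, since any curve tangent to $\F$ does. It gives no global information, and these curves may be dense in $T_j$, as in the irrational linear model. Producing one invariant Jordan curve around the fixed point, inside the domain of the holonomy germ, needs controlled transport from scale $r_j$ to $\Sigma$. One way would be a global meridian section for the desingularized characteristic flow on the solid torus bounded by $T_j$. None of this is provided.

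Second, and more seriously, holonomy linearizability plus Mattei--Moussu would give only abstract linearizability, in some chart unrelated to the Euclidean one. The theorem asserts more: in the given coordinates, after a unitary change, $\F$ is already $\mu w\,dz+\lambda z\,dw=0$, and the tangency loci are Clifford tori for all small $r$. Your ``final normalization'' does not deliver this.

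The paper gets both conclusions at once with an elementary perturbation argument:
\begin{enumerate}
\item $\{\Re G_r=0\}$ is a single graph $\Sigma_r\simeq\T^2$ over the Clifford torus $T_0$. The compact two-dimensional tangency set is open and closed in it, so $\Im G_{r_j}\equiv0$ on $\Sigma_{r_j}$.
\item Write $X=X_0+X_m+O(m+1)$, with $X_m$ the first term that is not an orbital multiple of $X_0$. Dividing by $r_j^{m-1}$ gives $\Im(P_m\bar z+Q_m\bar w)=0$ on $T_0$.
\item Every Fourier frequency of $P_m\bar z+Q_m\bar w$ on $T_0$ has component sum $m-1>0$. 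Hence the function vanishes identically on $T_0$.
\item This makes $X_m$ a multiple of $X_0$, a contradiction. Therefore $X=hX_0$ in the unitarily rotated original coordinates.
\end{enumerate}
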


Since we are interested in classification under analytic changes of coordinates, we shall extend our framework by allowing tangency sets which are biholomorphic 
to the torus, not necessarily in the cartesian coordinates. 
More precisely, we study the analytic classification problem  where the tangency set is not-empty, biholomorphic to a torus in a natural sense.  
An important class of circled domains we consider  is the class of Reinhardt domains. 
A domain $D\subset\C^2$ is called \emph{Reinhardt} if it is invariant under the standard $\T^2$-action
\[
 (z,w)\longmapsto (e^{i\theta}z,e^{i\varphi}w),
 \qquad \theta,\varphi\in\R.
\]
Reinhardt symmetry alone does not imply pseudoconvexity. In our next  result we fix a bounded Reinhardt domain $D\Subset\C^2$ containing the origin and assume that its boundary is smooth and strictly pseudoconvex. Given $r>0$ we denote by $rD\subset \mathbb C^2$ its image under the homothety $(z,w)\longmapsto (rz,rw)$. 
\begin{theorem}[Reinhardt-domain geometric linearization]\label{thm:main}
Fix a bounded Reinhardt domain $D\Subset\C^2$ containing $0$, with smooth strictly pseudoconvex boundary. Let $\F$ be a nondegenerate germ of one-dimensional holomorphic foliation at $0\in\C^2$. The following conditions are equivalent:
\begin{enumerate}
\item There exists a system of local holomorphic coordinates $(z,w)$ centered at the origin and adapted to the eigendirections of the linear part of $\F$ (equivalently, the linear part is diagonal in these coordinates), together with a sequence $r_j\downarrow0$, such that, after identifying the coordinate neighborhood with a neighborhood of $0$ in $\C^2$, each boundary $\partial(r_jD)$ is contained in that neighborhood and
\[
 M\bigl(\F,\partial(r_jD)\bigr)
\]
is a nonempty smooth real surface of dimension two.
\item $\F$ is analytically linearizable of Siegel type; that is, there exists a local biholomorphism
\[
 \Psi:(\C^2,0)\longrightarrow(\C^2,0)
\]
such that, in the coordinates $(z,w)=\Psi(x,y)$,
\[
 \Psi_*\F:\quad z\,dw-\tau w\,dz=0,
 \qquad \tau\in\R_{<0}.
\]

\end{enumerate}
Moreover, whenever condition \textup{(1)} holds, in the same coordinates $(z,w)$ the foliation is already defined by
\[
 \mu w\,dz+\lambda z\,dw=0
\]
for some $\lambda,\mu>0$. For every sufficiently small $r>0$, the tangency locus $M(\F,\partial(rD))$ is the homothetic image of the unique Reinhardt torus on $\partial D$ tangent to the corresponding linear Siegel foliation.
\end{theorem}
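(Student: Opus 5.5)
The plan is to rescale to the linear part, use the two-dimensionality of the tangency locus to force the whole tangency function to be real on a torus, and then kill the nonlinear terms by Fourier analysis on that torus, degree by degree along $r_j$.

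\textbf{Setup.} Since $D$ is Reinhardt, $D=\{\phi(|z|^2,|w|^2)<0\}$ for a smooth $\phi$ with $d\phi\neq0$ on $\partial D$. Write $\F$ as generated by $X=P\,\pd_z+Q\,\pd_w$. Complex tangency of $\F$ to $\partial(rD)=\{\phi(|z|^2/r^2,|w|^2/r^2)=0\}$ is the single complex equation $\pd\rho(X)=0$. Using $\phi_1=\pd_1\phi$ and $\phi_2=\pd_2\phi$, this reads
\[
 G_r:=\phi_1\,\ol z\,P+\phi_2\,\ol w\,Q=0 .
\]
Rescaling by $(z,w)\mapsto(rz,rw)$ and dividing by $r^2$ turns this into an equation on the fixed hypersurface $\partial D$:
\[
 G^{(r)}=G_0+\sum_{k\ge2}r^{k-1}G_k .
\]
Here $G_0=\phi_1 a|z|^2+\phi_2 b|w|^2$, since in adapted coordinates the linear part is $\mathrm{diag}(a,b)$, and $G_k$ collects the contributions of the degree-$k$ homogeneous parts of $P,Q$. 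Generically two real equations cut a $3$-manifold in curves. So the hypothesis that $M_j$ is two-dimensional is a strong degeneracy, and I will exploit it twice.

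\textbf{Step 1: the linear part.} Let $j\to\infty$ and pass to a Hausdorff limit of the rescaled $M_j\subset\partial D$. Suppose $a/b\notin\R_{<0}$. Because $\phi_1,\phi_2$ are real and, by strict pseudoconvexity and $0\in D$, nonnegative and not both zero on $\partial D$, the equations $\mathrm{Re}\,G_0=\mathrm{Im}\,G_0=0$ then force $z=0$ or $w=0$. The zero set of $G_0$ is therefore a finite union of circles (or empty), transversally cut out. A nearby small perturbation cannot contain a $2$-dimensional piece, which is a contradiction. Hence, after rescaling $X$, we may take $a=\mu>0$ and $b=\lambda>0$ up to orientation, so $\F$ has linear part $\mu w\,dz+\lambda z\,dw$ and $G_0$ is real-valued. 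The zero set of $G_0$ is where $\mu\phi_1|z|^2=\lambda\phi_2|w|^2$. Under the map $(|z|,|w|)$, $\partial D$ becomes a curve which strict pseudoconvexity makes strictly log-convex, and along this curve the ratio $\phi_1|z|^2/(\phi_2|w|^2)$ is strictly monotone. So this zero set is a single Reinhardt torus $T$, and $dG_0\neq0$ along $T$. This gives the uniqueness statement in the theorem and also the direction (2)$\Rightarrow$(1): in the coordinates $\Psi$ the computation is exact for every $r$.

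\textbf{Step 2: killing the nonlinear terms (the main obstacle).} Since $\mathrm{Re}\,G^{(r_j)}$ is a small perturbation of $G_0$, its zero set in a neighborhood of $T$ is a torus $T_j$ close to $T$. The locus $M_j$ is a closed $2$-dimensional subset of $T_j$, hence open and closed by invariance of domain, so $M_j=T_j$ and $\mathrm{Im}\,G^{(r_j)}\equiv0$ on $T_j$. I then argue by induction on the lowest degree $k\ge2$ with $(P_k,Q_k)\neq0$.

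Parametrize $T_j$ as a graph over $T$, so that it is $T$ moved by a displacement of size $O(r_j^{k-1})$. Divide $\mathrm{Im}\,G^{(r_j)}|_{T_j}$ by $r_j^{k-1}$ and let $j\to\infty$. The displacement term contributes only $\mathrm{Im}(dG_0\cdot v)=0$, because $G_0$ is real. This leaves $\mathrm{Im}\,G_k\equiv0$ on $T$.

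On $T$ the coefficients $\phi_1,\phi_2,|z|,|w|$ are constant. So $G_k$ is a trigonometric polynomial whose monomials $\ol z z^iw^l$ carry the characters $(i-1,l)$, and whose monomials $\ol w z^iw^l$ carry $(i,l-1)$, with $i+l=k$. Real-valuedness forces each character to pair with its conjugate $(1-i,-l)$ or $(-i,1-l)$ with conjugate coefficient. This pairing is impossible for $k\ge2$, because it would require exponents $\le1$ summing to $k$ with the same total degree. Hence $P_k=Q_k=0$.

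Induction over $k$ then shows $X$ is exactly linear in the coordinates $(z,w)$. Convergence of the series is then automatic, since the germ equals its linear part. The delicate points I expect to check carefully are the uniform control of the graph displacement of $T_j$ over $T$ and the legitimacy of dividing by $r_j^{k-1}$ along only a sequence. Both reduce to the implicit function theorem applied to $G_0$ near $T$.
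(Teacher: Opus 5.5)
There is a genuine gap at the end of Step 2, where you pass from $\Im G_k\equiv0$ on $T$ to $P_k=Q_k=0$. Your frequency count correctly shows that $G_k$ itself vanishes on $T$: every character has component sum $k-1>0$, so none is paired with its conjugate. It does not show that the individual coefficients vanish. The monomials $\ol z\,z^iw^l$ and $\ol w\,z^{i-1}w^{l+1}$ carry the same character $(i-1,l)$ and can cancel. On $T=\{|z|^2=s_0,\ |w|^2=t_0\}$, substitute $\ol z=s_0/z$ and $\ol w=t_0/w$, and use the relation $\mu\phi_1s_0=\lambda\phi_2t_0$ that holds there. Then $G_k|_T\equiv0$ is equivalent to the polynomial identity $\lambda wP_k+\mu zQ_k\equiv0$. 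That is, $X_k=R_{k-1}X_0$, where $X_0=\mu z\pd_z-\lambda w\pd_w$ in your normalization and $R_{k-1}$ is homogeneous of degree $k-1$. Nothing stronger can hold. The field $X=(1+z)X_0$ generates the linear Siegel foliation, and its tangency function is $(1+z)$ times that of $X_0$, so it has exactly the same tangency loci on every $\pd(rD)$. Yet $X_2=zX_0\neq0$. So your conclusion that the generator is exactly linear is false. Your induction breaks at the first degree where $X_k$ is a nonzero multiple of $X_0$, and the remark that convergence is ``automatic'' rests on this false claim.

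The missing idea is the orbital freedom. Multiplying $X$ by a holomorphic unit changes neither the foliation nor any tangency locus, and it only multiplies $X\wedge X_0$ by that unit. The paper uses this in an argument by contradiction:
\begin{enumerate}
\item Suppose $X\wedge X_0\not\equiv0$, and let $m\ge2$ be the least degree with $X_m\wedge X_0\neq0$.
\item For $2\le k<m$, divisibility gives $X_k=R_{k-1}X_0$.
\item Multiplying by the unit $\bigl(1+\sum_{k=2}^{m-1}R_{k-1}\bigr)^{-1}$ gives $X=X_0+X_m+O(m+1)$, with $X_m\wedge X_0$ unchanged.
\item Your degree-$m$ argument then applies: the graph $\Sigma_r$ over $T$, $M_{r_j}=\Sigma_{r_j}$, dividing $\Im G$ by $r_j^{m-1}$, and the Fourier step. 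It yields $X_m\wedge X_0=0$, a contradiction.
\end{enumerate}
Hence $X\wedge X_0\equiv0$. Then $z\mid P$ and $w\mid Q$, which give $X=h_0X_0$ with $h_0(0)=1$. Only finitely many unit multiplications are used, so no convergence question arises.

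With this repair, your Step 1 and the rest of Step 2 agree with the paper's proof; your ratio $\phi_1|z|^2/(\phi_2|w|^2)$ is the paper's $h(s)$. Two small points. When $a/b\notin\R_{<0}$, the zero set of $G_0$ is actually empty, since $G_0=b\phi_2|w|^2\neq0$ at $(0,w)\in\pd D$. For (2)$\Rightarrow$(1), you should say that the ratio runs from $0$ at the $w$-axis to $+\infty$ at the $z$-axis; that is what guarantees $T$ exists.
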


The round sphere has an additional unitary symmetry, and this removes the spectral-alignment hypothesis.
Thus Theorem~\ref{thm:spherical-characterization} is not merely the specialization $D=\mathbb B^2$ of Theorem~\ref{thm:main}: its converse is stronger because the Euclidean axes need not initially be aligned with the eigendirections. The extra conclusion comes from the $U(2)$-symmetry of the sphere and from a simple rank-one tangency mechanism which will also reappear in the final section on smooth tangency tori.

\begin{remark}[strict pseudoconvexity without Reinhardt symmetry is not enough]
The Reinhardt hypothesis in Theorem~\ref{thm:main} is substantive. Consider the already linear Siegel foliation generated by
\[
 X_0=z\frac{\partial}{\partial z}-w\frac{\partial}{\partial w}
\]
and, for a small real $\varepsilon\neq0$, the domain
\[
 D_\varepsilon
 =\left\{(z,w):|z|^2+|w|^2+\varepsilon\Re z<1\right\}.
\]
This is a translated Euclidean ball, hence bounded with real-analytic strictly pseudoconvex boundary and containing $0$, but it is not Reinhardt. On $\partial D_\varepsilon$ the complex tangency equation is
\[
 |z|^2-|w|^2+\frac{\varepsilon}{2}z=0.
\]
Its imaginary part forces $\Im z=0$; writing $z=x\in\R$ and combining with the boundary equation gives
\[
 2x^2+\frac{3\varepsilon}{2}x=1.
\]
Thus the tangency locus is, when nonempty, a finite union of circles and is therefore one-dimensional. Since $X_0$ is homogeneous, the same conclusion holds on every homothetic boundary $\partial(rD_\varepsilon)$. Hence strict pseudoconvexity alone does not imply the forward direction of Theorem~\ref{thm:main}.
\end{remark}

In the resonant subcase, either of the shrinking-boundary theorems immediately yields a nonconstant holomorphic first integral; see Corollary~\ref{cor:resonant-first-integral}.

We also consider tangency sets arising as images of the distinguished boundary $\T^2$ of an analytically embedded bidisc. In the {\em Cartesian case} there is an elementary rigidity phenomenon: if $\Om=D_1\times D_2$ and $\partial D_1\times\partial D_2$ lies in a round sphere on which $\F$ is complex tangent along this product torus, then $D_1$ and $D_2$ are centered discs and the foliation is the corresponding linear Siegel foliation up to multiplication of a generator by a holomorphic factor. The precise statement and proof are given in Proposition~\ref{prop:cartesian}.

The polynomial one-boundary criterion below is genuinely spherical. 

\begin{theorem}[polynomial Shilov rigidity]\label{thm:polynomial}
Let 
\[
 \Phi: W \longrightarrow \Om\subset\C^2
\]
be a biholomorphic embedding induced by a polynomial map, defined in a neighborhood $W$ of $\ol{\D}^2$, with $\Phi(0)=0$, and set
\[
 T=\Phi(\T^2),\qquad \T^2=\{(z,w):|z|=|w|=1\}.
\]
Assume that $T\subset S^3(R)$, that $\F$ is holomorphic in a neighborhood of $\ol\Om$, that
\[
 T\subset M(\F,S^3(R)),
\]
and that $0=\Phi(0)$ is an isolated nondegenerate Siegel singularity of $\F$.
 Then $\Phi$ is linear, up to a unitary change of target coordinates, and the germ of $\F$ at $0$ is analytically linearizable as a Siegel type singularity.
\end{theorem}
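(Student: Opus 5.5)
The strategy is to prove first that $\Phi$ is linear, up to a unitary change of target coordinates, with image a product of centered discs. Once this is known, $\Om=D_1\times D_2$ with $\partial D_1\times\partial D_2=T\subset S^3(R)$, and Proposition~\ref{prop:cartesian} then gives the linear Siegel foliation, up to a holomorphic factor, which yields linearizability. So the real content is the rigidity of $\Phi$, and for it I will use the constraint $|\Phi|^2\equiv R^2$ on $\T^2$ together with the tangency condition.

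\emph{Step 1 (Fourier expansion of the sphere condition).} Write $\Phi(z,w)=\sum_{(m,n)\in S}a_{mn}z^mw^n$ with $a_{mn}\in\C^2$, $S$ finite and $a_{00}=0$. On $\T^2$ we have
\[
 |\Phi|^2=\sum_{k,l\in S}\ip{a_k}{a_l}\,\zeta^{k-l},\qquad \zeta^{(p,q)}=z^pw^q .
\]
Requiring this to equal $R^2$ means every nonzero frequency vanishes. Take an extreme point $k_0$ of the convex hull of $S\cup\{0\}$ that differs from $0$. The frequency $k_0-0$ is attained only by the pair $(k_0,0)$, and $a_{00}=0$, so this gives no information yet. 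I will therefore order $S$ lexicographically and compare the top index $k_{\max}$ with the lowest nonzero index $k_{\min}$. The frequency $k_{\max}-k_{\min}$ is attained by a single pair, which forces $\ip{a_{k_{\max}}}{a_{k_{\min}}}=0$. Iterating this over the Newton polygon gives orthogonality relations, but not linearity on their own.

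\emph{Step 2 (bring in the tangency).} I will differentiate along the torus. At $p=\Phi(z,w)\in T$ the vectors $z\Phi_z$ and $w\Phi_w$ span $T_pT$ over $\R$, after multiplication by $i$. Differentiating $|\Phi|^2=R^2$ gives
\[
 \Re\ip{iz\Phi_z}{\Phi}=\Re\ip{iw\Phi_w}{\Phi}=0 .
\]
Tangency of $\F$ along the two-dimensional set $T$ is the rank-one mechanism announced in the introduction. The complex line $T^\C_pS^3=\Phi^\perp$ contains $\F$'s direction, and $T$ being a union of tangency points forces $T_pT\cap T^\C_pS^3$ to be at least one-dimensional. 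Combined with the identities above, I expect this to give the holomorphic identity
\[
 \ip{z\Phi_z}{\Phi}\equiv c_1,\qquad \ip{w\Phi_w}{\Phi}\equiv c_2
\]
with real constants on $\T^2$, where $\ip{\cdot}{\cdot}$ is the Hermitian pairing and the constants are read through Fourier coefficients. Reading these in Fourier modes gives $\sum_{k,l}k_1\ip{a_k}{a_l}\zeta^{k-l}=c_1$. Subtracting multiples of the sphere relation, one obtains weighted relations $\sum (k_1-\alpha)\ip{a_k}{a_l}$ in each frequency. The diagonal (zero-frequency) relations $\sum_k k_1|a_k|^2=c_1$, $\sum_k k_2|a_k|^2=c_2$ and $\sum_k|a_k|^2=R^2$, together with the off-diagonal vanishing, should force $S\subset\{(1,0),(0,1)\}$. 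My first attempt will be a weighted Parseval/convexity argument: use the psh function $|\Phi|^2\le R^2$ on $\D^2$, which follows from the maximum principle on the Shilov boundary, restricted to the slices $\{w=0\}$ and $\{z=0\}$. There $\Phi(\cdot,0)$ is a polynomial map of the disc with image inside a ball and with boundary on the sphere, hence an inner polynomial map with $\Phi(0)=0$. Its tangency constraint should make it homogeneous of degree one.

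\emph{Step 3 (conclude).} With $\Phi(z,w)=za+wb$ and the $(1,1)$-frequency relation $\ip{a}{b}=0$, a unitary change makes $\Phi(z,w)=(\rho_1z,\rho_2w)$ with $\rho_1^2+\rho_2^2=R^2$. Hence $\Om$ is a product of centered discs and $T$ is a Clifford torus. Proposition~\ref{prop:cartesian} then shows that $\F$ is given by $\mu w\,dz+\lambda z\,dw$ times a holomorphic factor. Since $0$ is an isolated nondegenerate singularity, the factor is a unit and the germ is linear Siegel.

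The main obstacle is Step~2: extracting from the tangency a rigidity strong enough to kill the higher-degree monomials. This is where polynomiality, and not mere holomorphy, is used, through the finiteness of the Newton polygon in the Fourier argument.
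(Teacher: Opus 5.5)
\textbf{There is a genuine gap in Step~2.} The mechanism you propose there cannot work, because it never uses the Siegel hypothesis, and without that hypothesis the rigidity of $\Phi$ is false.

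Once $\Phi(\T^2)\subset S^3(R)$, the tangency hypothesis carries essentially no information. On $\T^2$ the pulled-back complex tangent line is the kernel of $\ol\Phi\cdot d\Phi$, and this coincides there with the kernel of a global polynomial form. Also, $T_pT\cap T^{\C}_pS^3\neq0$ holds for every surface in $S^3$ by dimension count, so it gives you nothing.

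Concretely, for small $\varepsilon>0$ take
\[
 \Phi(z,w)=\bigl(z(1+\varepsilon z),\;w(1-\varepsilon z)\bigr).
\]
This is a polynomial biholomorphic embedding of a neighborhood of $\ol{\D}^2$ (injective as soon as $2\varepsilon(1+\delta)<1$ on the $(1+\delta)$-polydisc), with $\Phi(0)=0$ and $D\Phi(0)=I$. On $\T^2$ one has $|\Phi|^2=|1+\varepsilon z|^2+|1-\varepsilon z|^2=2(1+\varepsilon^2)$. The polynomial form
\[
 \Omega=w(z+\varepsilon)\,d\Phi_1+z(z-\varepsilon)\,d\Phi_2
\]
equals $z^2w\,\ol\Phi\cdot d\Phi$ on $\T^2$. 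Hence $\F=\Phi_*\{\Omega=0\}$ is complex tangent to $S^3(R)$ along $T$. Its linear part at $0$ is $\varepsilon(w\,dz-z\,dw)$, a nondegenerate isolated radial node.

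So every hypothesis except ``Siegel'' holds, yet $\Phi$ is not linear, even up to a unitary change. In this example $\ip{z\Phi_z}{\Phi}=1+3\varepsilon^2+2\varepsilon\Re z$ on $\T^2$. This is real, which is all that differentiating $|\Phi|^2=R^2$ gives you, but it is not constant. So the identity you expect in Step~2 is false.

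Your slice argument also fails. The set $\{w=0\}$ does not meet $\T^2$, so $\Phi(\cdot,0)$ need not send $\T$ into the sphere; already for $\Phi=(az,bw)$ one has $|\Phi(z,0)|=|a|<R$. The slices $z\mapsto\Phi(z,w_0)$ with $|w_0|=1$ do have boundary in the sphere, but they are not linear in the example above.

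\textbf{The missing idea is to make the singularity at $0$ do the work.} The paper proceeds as follows.
\begin{enumerate}
\item It introduces the reflected polynomial $\Phi^{\#}=z^Mw^N\ol{\Phi(1/\ol z,1/\ol w)}$ and proves the polynomial identity $\Phi^{\#}\cdot\Phi=R^2z^Mw^N$.
\item It deduces that $\Omega=\Phi^{\#}\cdot d\Phi$ is saturated at $0$.
\item It shows $\Omega$ defines $\Phi^*\F$ near $0$, because the wedge with a defining form of $\Phi^*\F$ vanishes on the maximally totally real $\T^2$. This is where polynomiality is really used: the reflection is again a polynomial.
\item The singularity at $0$ forces $v_{MN}=0$. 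The linear part is then $(\ol u z+\ol v w)\cdot(p\,dz+q\,dw)$.
\item The Fourier modes $(2-M,-N)$, $(-M,2-N)$ and $(1-M,1-N)$ of the sphere relation turn this linear part into the radial form $B(z\,dw-w\,dz)$ whenever $(M,N)\neq(1,1)$. This is exactly what happens in the example, where $(M,N)=(2,1)$.
\end{enumerate}
Only the Siegel hypothesis excludes this radial case, forcing $\Phi=pz+qw$ with $\ip pq=0$.

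From that point your Step~3 finishes correctly. Note that the orthogonality comes from the mode $(1,-1)$, not $(1,1)$.
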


The logarithmic criterion below is more flexible and does not require a sphere, or even Reinhardt symmetry. 

\begin{theorem}[logarithmic rigidity for strictly pseudoconvex boundaries]\label{thm:logarithmic}
Let $D\Subset\C^2$ be a bounded domain with smooth strictly pseudoconvex boundary, and let
\[
 \Phi:\D^2\longrightarrow \Om\subset\C^2
\]
be a biholomorphic embedding extending holomorphically to a neighborhood of $\ol{\D}^2$, with $\Phi(0)=0$, such that
\[
 \Phi(\ol{\D}^2)\subset\ol D,
 \qquad
 T:=\Phi(\T^2)\subset\partial D.
\]
Let $\F$ be a one-dimensional holomorphic foliation defined in a neighborhood of $\Phi(\ol{\D}^2)$ and assume
\[
 T\subset M(\F,\partial D).
\]
Suppose that the pulled-back foliation $\G=\Phi^*\F$ is defined on a neighborhood of $\ol{\D}^2$ by
\[
 \omega=a(z,w)\frac{dz}{z}+b(z,w)\frac{dw}{w},
\]
where $a$ and $b$ are holomorphic units. Then the germ of $\F$ at $0$ is analytically linearizable of Siegel type.
\end{theorem}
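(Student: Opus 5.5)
The plan is to pull the tangency condition back to the bidisc, where it becomes a statement about the radial derivatives of a plurisubharmonic function on the distinguished boundary. A Hopf-lemma argument combined with a maximum-principle argument for pluriharmonic functions should then show that $a/b$ is a positive constant. That makes $\G$ literally linear Siegel in the coordinates given by $\Phi$, and $\Phi$ itself is the linearizing biholomorphism.

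\textbf{Step 1: the tangency condition on the bidisc.} Choose a smooth defining function $\rho$ for $D$ near $\partial D$ that is strictly plurisubharmonic on a neighborhood $U$ of $\partial D$. Set $\psi=\rho\circ\Phi$ on the open set $\Phi^{-1}(U)$, which contains a neighborhood of $\T^2$ in $\ol{\D}^2$. Since $\Phi(\ol{\D}^2)\subset\ol D$ and $T\subset\partial D$, we have $\psi\le 0$ on $\ol{\D}^2\cap\Phi^{-1}(U)$ and $\psi=0$ on $\T^2$. Because $\Phi$ is a biholomorphism, $\psi$ is strictly plurisubharmonic there. The pulled-back foliation $\G$ is generated by
\[
X=b\,z\,\partial_z-a\,w\,\partial_w,
\]
since $\omega(X)=ab-ba=0$. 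Complex tangency of $\G$ to $\Phi^{-1}(\partial D)$ along $\T^2$, which is the pullback of $T\subset M(\F,\partial D)$, reads $\partial\psi(X)=0$ on $\T^2$, that is,
\[
b\,z\psi_z=a\,w\psi_w\quad\text{on }\T^2 .
\]

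\textbf{Step 2: radial derivatives, and the expected obstacle.} Fix $(z_0,w_0)\in\T^2$. The function $\psi$ vanishes identically on the circle $\{|z|=1,\ w=w_0\}$, so its tangential derivative vanishes there. This gives $\Im(z\psi_z)=0$, and hence $\alpha:=z\psi_z$ is real and equals $\tfrac12$ times the outward radial derivative. Since $\psi\le0$ inside, $\alpha\ge0$; similarly $\beta:=w\psi_w\ge0$.

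The main point, which I expect to be the delicate step, is strict positivity. Apply the Hopf lemma to the function $z\mapsto\psi(z,w_0)$ on a thin annulus $\{1-\delta<|z|\le1\}$. There it is strictly subharmonic, since its Laplacian is the Levi form of $\rho$ evaluated on $\partial_z\Phi\ne0$. It is $\le0$ and attains its maximum $0$ on the outer circle. It is not constant, because it is strictly subharmonic. Hence $\alpha>0$, and likewise $\beta>0$. Some care is needed to make sure $\Phi^{-1}(U)$ really contains such annuli uniformly along $\T^2$; this follows by compactness of $\T^2$.

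\textbf{Step 3: the ratio $a/b$ is a positive constant.} From Step 1 and Step 2 we get $a/b=\alpha/\beta\in\R_{>0}$ on $\T^2$. The quotient $h=a/b$ is holomorphic on a neighborhood of $\ol{\D}^2$, because $a$ and $b$ are units. Its imaginary part $\Im h$ is pluriharmonic and vanishes on the Shilov boundary $\T^2$. The iterated Poisson integral, equivalently the maximum principle applied to $\pm\Im h$, then gives $\Im h\equiv0$. Therefore $h\equiv c$ is a constant, and $c>0$ because $h=\alpha/\beta>0$ on $\T^2$.

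\textbf{Step 4: conclusion.} Consequently
\[
\omega=b\bigl(c\,\tfrac{dz}{z}+\tfrac{dw}{w}\bigr),
\]
so $\G$ is defined by $c\,w\,dz+z\,dw=0$, that is, $z\,dw-\tau w\,dz=0$ with $\tau=-c<0$. Since $\Phi(0)=0$ and $\Phi$ is a biholomorphism, $\Psi=\Phi^{-1}$ conjugates the germ of $\F$ at $0$ to this linear Siegel model, as required.
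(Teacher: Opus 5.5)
Your proposal is correct and follows essentially the same route as the paper: pull back a defining function that is strictly plurisubharmonic near $\partial D$, obtain reality of $z\psi_z$ and $w\psi_w$ on $\T^2$ from angular derivatives and their positivity from the Hopf lemma on the slice discs $z\mapsto\Phi(z,w_0)$ and $w\mapsto\Phi(z_0,w)$, read off $a/b=\alpha/\beta>0$ on $\T^2$ from the tangency condition, and conclude that $a/b$ is a positive constant. The only cosmetic difference is that you apply the maximum principle to the pluriharmonic function $\Im(a/b)$ on the distinguished boundary, whereas the paper runs the equivalent one-variable argument slice by slice.
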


The four principal rigidity results have complementary geometric characters. Theorem~\ref{thm:main} treats shrinking strictly pseudoconvex Reinhardt boundaries under compatibility with the eigendirections. Theorem~\ref{thm:spherical-characterization} uses the larger symmetry of the round sphere to remove that compatibility hypothesis and in fact forces unitary spectral alignment. Theorem~\ref{thm:polynomial} is a one-boundary spherical result in which polynomiality rigidifies the embedding itself. Theorem~\ref{thm:logarithmic}, by contrast, applies to an arbitrary smooth strictly pseudoconvex boundary once the embedded closed bidisc lies on the pseudoconvex side and the pulled-back foliation admits the global logarithmic representation above.

A second theme concerns the local dynamics near a smooth tangency torus. In the periodic case we prove that the singular characteristic line field admits a nonsingular real-analytic desingularization and that both the ambient holomorphic holonomy and the Poincar\'e return map of the desingularized flow are trivial; see Theorem~\ref{thm:periodic-torus-rigidity}. The differential identity underlying that desingularization is already used in the proof of Theorem~\ref{thm:spherical-characterization} to prevent a sequence of two-dimensional tangency surfaces from collapsing onto a lower-dimensional critical tangency set of the linear part.

\section{The spherical theorem and automatic spectral alignment}\label{sec:spherical}

We now prove Theorem~\ref{thm:spherical-characterization}. We first isolate the rank-one mechanism that prevents two-dimensional tangency surfaces from collapsing, after rescaling, onto a lower-dimensional critical zero set.

\begin{lemma}[rank-one spherical tangency]\label{lem:spherical-rank}
Let $X=P_1\partial_{z_1}+P_2\partial_{z_2}$ be holomorphic and regular near a point of $S^3(r)$, put
\[
 \rho=|z_1|^2+|z_2|^2-r^2,
 \qquad g=X(\rho)=P_1\ol z_1+P_2\ol z_2.
\]
At every tangency point $g=0$ one has
\[
 \ol X(g)=|P_1|^2+|P_2|^2=\|X\|^2>0.
\]
Consequently $dg|_{TS^3(r)}\ne0$. If $g^{-1}(0)\subset S^3(r)$ is a smooth real surface of dimension two, then the real map $g:S^3(r)\to\C$ has rank exactly one along that surface.
\end{lemma}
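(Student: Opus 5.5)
The plan is to compute $\ol X(g)$ directly and then interpret it as a statement about the real differential of $g$ along $S^3(r)$. Write $\ol X=\ol P_1\partial_{\ol z_1}+\ol P_2\partial_{\ol z_2}$, the complex conjugate vector field. Since $P_1,P_2$ are holomorphic, $\partial_{\ol z_k}P_j=0$, so only the antiholomorphic factors $\ol z_1,\ol z_2$ in $g=P_1\ol z_1+P_2\ol z_2$ are differentiated. This gives $\ol X(g)=P_1\ol P_1+P_2\ol P_2=\|X\|^2$ identically, not just on $\{g=0\}$. It is positive because $X$ is regular.

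Next I will show that $\ol X$, or rather its real part, is tangent to $S^3(r)$ exactly at tangency points. We have $\ol X(\rho)=\ol P_1 z_1+\ol P_2 z_2=\ol g$, so at points with $g=0$ both $X(\rho)$ and $\ol X(\rho)$ vanish. The real vector fields $V=X+\ol X=2\Re X$ and $W=i(X-\ol X)$ then satisfy $V(\rho)=2\Re g=0$ and $W(\rho)=i(g-\ol g)=-2\Im g=0$. Hence $V$ and $W$ are tangent to $S^3(r)$ at tangency points.

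Now compute $dg$ on these vectors. The key fact is $X(g)$, which is not obviously zero, so I avoid needing it. Consider $\ol g$ instead, noting that $dg\ne0$ iff $d\ol g\ne0$ and that both give the same rank for the real map $g$. We have $X(\ol g)=\ol{\ol X(g)}=\|X\|^2$. Also $\ol X(\ol g)=\ol{X(g)}$. Then $V(\ol g)=\|X\|^2+\ol{X(g)}$ and $W(\ol g)=i(\|X\|^2-\ol{X(g)})$. If both vanished, adding $V(\ol g)$ to $-iW(\ol g)$ would give $2\|X\|^2=0$, a contradiction. So $d\ol g$, and hence $dg$, is nonzero on $T_pS^3(r)$, and the real rank of $g|_{S^3(r)}$ at $p$ is at least one.

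For the final claim, suppose $\Sigma=g^{-1}(0)\cap S^3(r)$ is a smooth two-dimensional surface. Then $dg$ vanishes on $T_p\Sigma$, so $\ker(dg|_{T_pS^3})$ contains a 2-plane inside the 3-dimensional $T_pS^3$. The rank is therefore at most one, and combined with the previous step it is exactly one. The only delicate point is the bookkeeping in the third paragraph: the tangent vectors must be real, which is why I use $V$ and $W$ rather than $\ol X$ itself. I expect that to be the main (mild) obstacle.
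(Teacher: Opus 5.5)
Your proof is correct and follows the same route as the paper. Holomorphicity gives $\ol X(g)=\|X\|^2$, the real fields underlying $X$ and $iX$ are tangent to $S^3(r)$ where $g=0$, so $dg|_{TS^3(r)}\neq0$, and a dimension count gives rank exactly one. Your explicit identity $V(\ol g)-iW(\ol g)=2\|X\|^2$ and your observation that $dg$ vanishes on $T_p\Sigma$ simply spell out steps the paper leaves implicit. You could also have used $V(g)+iW(g)=2\|X\|^2$ directly, without passing to $\ol g$.
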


\begin{proof}
Holomorphicity of $P_1,P_2$ gives
\[
 \ol X(g)=|P_1|^2+|P_2|^2.
\]
At $g=0$, both real and imaginary parts of $X$ are tangent to the sphere. Since the displayed derivative is nonzero, the real differential of $g$ restricted to the tangent space of the sphere cannot vanish. A two-dimensional zero set inside the three-dimensional sphere has real codimension one, so its defining map cannot have rank two there; hence the rank is exactly one.
\end{proof}

\begin{lemma}[linear spherical alignment]\label{lem:linear-spherical-alignment}
Let $X_1(z)=Az$ be an invertible complex linear vector field on $\C^2$. If $M(X_1,S^3(1))$ contains a smooth real surface of dimension two, then $A$ is unitarily diagonalizable and its two eigenvalues have negative real ratio.
\end{lemma}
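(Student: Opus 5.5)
The plan is to reduce the tangency condition to a problem about the numerical range of $A$, using the Hopf fibration $S^3(1)\to\mathbb{CP}^1\cong S^2$. For $X_1=Az$ the tangency function of Lemma~\ref{lem:spherical-rank} is
\[
g(z)=\ol z^{\,T}Az=\operatorname{tr}\bigl(A\,zz^*\bigr).
\]
It is invariant under $z\mapsto e^{it}z$. So $M(X_1,S^3(1))$ is a union of Hopf circles: it is the preimage of a subset $Z\subset S^2$.

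\textbf{Step 1: write $g$ as an affine function on the Bloch sphere.} For a unit vector $z\in\C^2$ write $zz^*=\tfrac12\bigl(I+\sum_{k=1}^3 v_k\sigma_k\bigr)$, with $\sigma_k$ the Pauli matrices and $v=(v_1,v_2,v_3)\in S^2$ the image of $[z]$. Then
\[
g=c+L(v),\qquad c=\tfrac12\operatorname{tr}A,\qquad L(v)=\tfrac12\sum_k v_k\operatorname{tr}(A\sigma_k),
\]
where $L:\R^3\to\C\cong\R^2$ is real linear. Hence $Z=S^2\cap\{v: L(v)=-c\}$.

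\textbf{Step 2: classify $Z$ by the rank of $L$.} If $\operatorname{rank}_\R L=2$, the fibre $\{L=-c\}$ is an affine line, so $Z$ has at most two points. If $\operatorname{rank}L=0$, then $Z$ is empty or all of $S^2$. The latter forces $c=0$ and $L=0$, hence $A=0$, which contradicts invertibility. If $\operatorname{rank}L=1$, the fibre is an affine plane or empty, so $Z$ is empty, a point, or a circle.

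The Hopf preimage of finitely many points is a finite union of circles, which is one-dimensional. A two-dimensional smooth surface in $M$ can therefore occur only when $\operatorname{rank}L=1$ and $Z$ is a genuine circle; its preimage is then a Clifford-type torus. The main care needed here is the bookkeeping that excludes the tangent-plane case, where $Z$ is a single point. This should also be consistent with the rank-one conclusion of Lemma~\ref{lem:spherical-rank}, which can serve as a cross-check.

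\textbf{Step 3: translate back to $A$.} The condition $\operatorname{rank}L=1$ means that the numbers $\operatorname{tr}(A\sigma_k)$ all lie on a common real line $e^{i\theta}\R$. Equivalently, the traceless part $A_0=A-\tfrac12(\operatorname{tr}A)I$ equals $e^{i\theta}B$ with $B$ Hermitian, traceless and nonzero.

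Solvability of $L(v)=-c$ forces $c\in e^{i\theta}\R$. So $A=e^{i\theta}(sI+B)$ with $s\in\R$. Diagonalize $B$ unitarily with eigenvalues $\pm\beta$, $\beta>0$. On the diagonal part the equation becomes $s+\beta v_3'=0$ in rotated Bloch coordinates. This cuts $S^2$ in a circle exactly when $|s|<\beta$.

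Thus $A$ is normal, hence unitarily diagonalizable, with eigenvalues $e^{i\theta}(s\pm\beta)$. Their ratio $(s+\beta)/(s-\beta)$ is a negative real number, which is the claim.
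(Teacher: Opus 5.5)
Your argument is correct, and it takes a genuinely different route from the paper.

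\emph{The paper's route.} It first applies a unitary change to put $A$ in Schur form $\begin{pmatrix}\alpha&\gamma\\0&\beta\end{pmatrix}$. It then writes the tangency equation in the coordinates $s=|z|^2$ and the relative phase of $w\overline{z}$. When $\gamma\neq0$, only the finitely many roots of a nontrivial quadratic in $s$ survive, so the locus is finitely many Hopf circles. In the diagonal case the sign condition is read off from $\alpha/\beta=-(1-s)/s$.

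\emph{Your route.} You observe that $z^*Az=\operatorname{tr}(A\,zz^*)$ descends through the Hopf map to an affine function $c+L(v)$ on the Bloch sphere. The tangency locus is therefore the Hopf preimage of a plane section of $S^2$. The real rank of $L$ then gives an exhaustive list without Schur triangularization: empty, one or two circles, or a torus.

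\emph{What each approach buys.} Yours yields an intrinsic description of the two-dimensional case: $A=e^{i\theta}(sI+B)$ with $B$ traceless Hermitian and $|s|<\beta$, where $\pm\beta$ are the eigenvalues of $B$. Thus $A$ is a unimodular multiple of an indefinite Hermitian matrix; equivalently, its numerical range is a segment with $0$ in its relative interior. Normality comes out directly rather than through showing $\gamma=0$, and invertibility is used only to exclude $A=0$. The paper's computation is more pedestrian, but it stays in the same $(s,\theta)$ coordinates used for the Clifford tori elsewhere in the paper.

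The steps you leave implicit are standard: the expansion $A=\tfrac12(\operatorname{tr}A)I+\tfrac12\sum_k\operatorname{tr}(A\sigma_k)\sigma_k$, and the fact that a unitary change of coordinates acts on the Bloch sphere by a rotation.
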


\begin{proof}
By a unitary change of coordinates, which preserves $S^3(1)$, put $A$ in Schur form
\[
 A=\begin{pmatrix}\alpha&\gamma\\0&\beta\end{pmatrix},
 \qquad \alpha\beta\ne0.
\]
Thus
\[
 X_1=(\alpha z+\gamma w)\frac{\pd}{\pd z}
      +\beta w\frac{\pd}{\pd w}.
\]
On $S^3(1)$ the tangency equation is
\begin{equation}\label{eq:schur-tangency}
 \alpha|z|^2+\beta|w|^2+\gamma w\ol z=0.
\end{equation}
Write $s=|z|^2$, $|w|^2=1-s$, and, when $0<s<1$,
\[
 w\ol z=\sqrt{s(1-s)}e^{i\theta}.
\]
If $\gamma\ne0$, equation~\eqref{eq:schur-tangency} determines
\[
 e^{i\theta}=-\frac{\alpha s+\beta(1-s)}{\gamma\sqrt{s(1-s)}},
\]
and therefore $s$ must satisfy
\begin{equation}\label{eq:schur-modulus}
 |\alpha s+\beta(1-s)|^2=|\gamma|^2s(1-s).
\end{equation}
The two sides give a real polynomial equation of degree at most two. It is not an identity, since at $s=0$ the difference equals $|\beta|^2>0$. Hence only finitely many values of $s$ occur; for each such value the relative phase is fixed, while the common Hopf phase is free. The tangency locus is therefore a finite union of circles, contradicting the existence of a two-dimensional surface. Thus $\gamma=0$.

Equation~\eqref{eq:schur-tangency} becomes
\[
 \alpha s+\beta(1-s)=0.
\]
A two-dimensional tangency component cannot lie on a coordinate circle, so $0<s<1$, and
\[
 \frac{\alpha}{\beta}=-\frac{1-s}{s}\in\R_{<0}.
\]
Thus the Schur form is diagonal and the eigenvalue ratio is negative real.
\end{proof}

\begin{lemma}[no collapse of spherical tangency surfaces]\label{lem:no-collapse}
Let $X=X_1+O(2)$ be a holomorphic vector-field germ with invertible linear part. Put
\[
 X_r(\zeta)=r^{-1}X(r\zeta),\qquad
 \rho(\zeta)=|\zeta_1|^2+|\zeta_2|^2-1,
 \qquad g_r=X_r(\rho)|_{S^3(1)}.
\]
If $r_j\downarrow0$ and each $g_{r_j}^{-1}(0)$ is a nonempty smooth real surface of dimension two, then $g_0^{-1}(0)=M(X_1,S^3(1))$ contains a smooth real surface of dimension two.
\end{lemma}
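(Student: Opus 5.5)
Our plan is to pass to a limit of the tangency surfaces inside the compact manifold $S^3(1)$, and to use Lemma~\ref{lem:spherical-rank} to keep the limit from dropping dimension.

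\emph{Step 1: uniform convergence.} Since $X(r\zeta)=rX_1\zeta+O(r^2)$, we have $X_r=X_1+O(r)$ in $C^\infty$ on a neighborhood of $S^3(1)$. Hence $g_r\to g_0=X_1(\rho)|_{S^3(1)}=\langle A\zeta,\zeta\rangle$ in $C^1$ (in fact $C^\infty$). Put $Z_j=g_{r_j}^{-1}(0)$. Each $Z_j$ is a nonempty closed subset of $S^3(1)$. After passing to a subsequence, $Z_j$ converges in the Hausdorff sense to a nonempty compact set $Z_\infty\subset g_0^{-1}(0)$.

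\emph{Step 2: rank and regularity of the limit.} Lemma~\ref{lem:spherical-rank} applies to each $X_{r_j}$, and also to $X_1$ since $A$ is invertible and $\zeta\ne0$. It gives $dg_0|_{TS^3}\ne0$ at every point of $g_0^{-1}(0)$. The lower bound is quantitative: $|\overline{X}(g)|=\|X\|^2\ge c>0$ uniformly in $j$, because $\|X_r\|$ is bounded below near $S^3(1)$ for small $r$. So all $g_{r_j}$ and $g_0$ have rank $\ge1$ on their zero sets, with a uniform bound.

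Now consider the real-valued combination $h=\Im(e^{-i\phi}g)$, choosing $\phi$ so that $dh\ne0$ at a point $p\in Z_\infty$. By the implicit function theorem, there are a neighborhood $U$ of $p$ and a uniform size such that $\{h_{r_j}=0\}\cap U$ and $\{h_0=0\}\cap U$ are smooth hypersurfaces (surfaces in $S^3$), and they converge in $C^1$. On $\{h_0=0\}\cap U$ the remaining equation is $k_0:=\Re(e^{-i\phi}g_0)=0$, and similarly for each $j$. Because $Z_j\cap U$ is two-dimensional and lies inside the two-dimensional surface $\{h_{r_j}=0\}$, it is an open subset of that surface. Since $Z_j$ is also closed, $k_{r_j}\equiv0$ on a whole component of $\{h_{r_j}=0\}\cap U$.

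\emph{Step 3: identically vanishing in the limit.} The components in Step 2 pass through points $p_j\to p$. Taking limits, $k_0\equiv0$ on the component of $\{h_0=0\}\cap U$ through $p$. Therefore $g_0^{-1}(0)\cap U=\{h_0=0\}\cap U$ is a smooth real surface of dimension two, as claimed.

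The main obstacle is Step 2. We need a uniform neighborhood size together with the fact that the zero surface of $g_{r_j}$ fills out an entire leaf of $\{h_{r_j}=0\}$ near $p$. The risk is that $Z_j$ is only a small piece whose boundary could shrink toward $p$. This is ruled out because $Z_j$ is closed and two-dimensional inside a connected two-dimensional level set, so $Z_j\cap U$ is open and closed there. Uniformity of $U$ follows from the uniform lower bound on $\|X_r\|$, which also controls the choice of $\phi$ near $p$.
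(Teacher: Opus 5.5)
Your proposal is correct and follows essentially the same route as the paper. Both arguments extract a limit tangency point $p$, rotate by a unimodular constant so that $d(\Im g_0)|_{TS^3}(p)\neq0$, represent $\{\Im g_r=0\}\cap U$ as connected graphs converging in $C^1$, show that $g_{r_j}$ vanishes on the whole graph, and pass to the limit. The differences are minor. Where the paper invokes the real-analytic identity principle, you use that $Z_j\cap U$ is open and closed in the connected graph, which is the same device the paper itself uses in the proof of Theorem~\ref{thm:spherical-characterization}. You also rightly skip the paper's preliminary ``rank exactly one'' step, which the argument does not need. One correction to your justification: the uniform size of $U$ comes from $C^1$ convergence together with the implicit function theorem, not from the lower bound on $\|X_r\|$.
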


\begin{proof}
Choose $p_j\in g_{r_j}^{-1}(0)$ and, after passing to a subsequence, $p_j\to p\in S^3(1)$. Since $X_r\to X_1$ in $C^1$ on the sphere,
\[
 g_0(p)=0.
\]
By Lemma~\ref{lem:spherical-rank}, $dg_0|_{TS^3}(p)\ne0$. On the other hand, its real rank cannot be two. Indeed, if it were two, the submersion theorem and $C^1$ convergence would imply that $g_r^{-1}(0)$ is locally one-dimensional near $p$ for all sufficiently small $r$, contrary to the presence of the two-dimensional zero surface through $p_j$.
Hence
\[
 \operatorname{rank}_{\R}dg_0|_{TS^3}(p)=1.
\]
After multiplying all $g_r$ by one fixed unimodular constant, we may assume that
\[
 d(\Im g_0)|_{TS^3}(p)\ne0.
\]
Choose a small coordinate neighborhood $U$ of $p$ in $S^3(1)$ so that the implicit-function theorem represents, for every sufficiently small $r$, the set
\[
 \Sigma_r=\{\Im g_r=0\}\cap U
\]
as one connected real-analytic graph over a fixed two-disc. These graphs depend continuously in $C^1$ on $r$ and converge to $\Sigma_0$.

For large $j$, $p_j\in U$ and the two-dimensional tangency surface $T_j=g_{r_j}^{-1}(0)$ meets $U$. Since
\[
 T_j\cap U\subset\Sigma_{r_j}
\]
and both are smooth real surfaces of dimension two near $p_j$, the inclusion is locally open. Thus $\Re g_{r_j}$ vanishes on a nonempty open subset of the connected real-analytic surface $\Sigma_{r_j}$. By the real-analytic identity principle,
\[
 \Re g_{r_j}\equiv0\qquad\hbox{on }\Sigma_{r_j}.
\]
Passing to the limit in the fixed graph coordinates gives
\[
 g_0\equiv0\qquad\hbox{on }\Sigma_0.
\]
Hence $g_0^{-1}(0)$ contains the smooth real surface $\Sigma_0$ through $p$.
\end{proof}

\begin{proof}[Proof of Theorem~\ref{thm:spherical-characterization}]
Assume first that $\F$ is analytically linearizable of Siegel type. Choose linearizing coordinates $(u,v)$, so that a generator is
\[
 X_0=\lambda u\frac{\pd}{\pd u}-\mu v\frac{\pd}{\pd v},
 \qquad \lambda,\mu>0.
\]
Taking the Euclidean spheres in these coordinates gives, for every small $r$,
\[
 X_0(|u|^2+|v|^2)=\lambda|u|^2-\mu|v|^2,
\]
and therefore the tangency locus is the Clifford torus
\[
 |u|^2=\frac{\mu}{\lambda+\mu}r^2,
 \qquad
 |v|^2=\frac{\lambda}{\lambda+\mu}r^2.
\]
Thus the tangency condition holds.

Conversely, suppose the tangency condition holds in fixed local Euclidean coordinates $(z,w)$, without assuming that their axes are eigendirections. Choose a holomorphic generator
\[
 X=X_1+X_2+\cdots
\]
with invertible linear part. Rescaling the spheres to $S^3(1)$ gives the family $X_r=r^{-1}X(r\cdot)$. Lemma~\ref{lem:no-collapse} shows that the linear tangency locus $M(X_1,S^3(1))$ contains a smooth real surface of dimension two. Lemma~\ref{lem:linear-spherical-alignment} therefore implies that $X_1$ is unitarily diagonalizable and has negative real eigenvalue ratio. Performing this unitary change does not alter any Euclidean sphere or the dimension of any tangency locus. After multiplication by a nonzero constant we may consequently assume
\[
 X_1=X_0=\lambda z\frac{\pd}{\pd z}-\mu w\frac{\pd}{\pd w},
 \qquad \lambda,\mu>0.
\]

It remains to eliminate the nonlinear terms. Suppose $X\wedge X_0\not\equiv0$. Multiplication by a holomorphic unit does not change the foliation or its tangency locus. Removing the homogeneous orbital multiples of $X_0$ that precede the first nontrivial wedge degree, we may write, for some $m\ge2$,
\[
 X=X_0+X_m+O(m+1),
 \qquad
 X_m=P_m\frac{\pd}{\pd z}+Q_m\frac{\pd}{\pd w},
\]
where $P_m,Q_m$ are homogeneous of degree $m$ and $X_m$ is not a homogeneous multiple of $X_0$.

On $S^3(1)$ set
\[
 G_r=X_r(|z|^2+|w|^2-1).
\]
Then
\[
 G_r=G_0+r^{m-1}F_m+O(r^m),
 \qquad
 G_0=\lambda|z|^2-\mu|w|^2,
\]
with
\[
 F_m=P_m\ol z+Q_m\ol w.
\]
The equation $G_0=0$ cuts out the Clifford torus
\[
 T_0=\left\{|z|^2=\frac{\mu}{\lambda+\mu},\quad
 |w|^2=\frac{\lambda}{\lambda+\mu}\right\}
\]
transversely. Hence for all sufficiently small $r$, the real equation $\Re G_r=0$ is a unique connected graph $\Sigma_r\simeq\T^2$ over $T_0$, and there are no other solutions. For $r=r_j$, the actual tangency set $\{G_r=0\}$ is a nonempty compact smooth surface of dimension two contained in $\Sigma_r$; it is therefore open and closed in $\Sigma_r$, hence equals $\Sigma_r$. Dividing the imaginary part by $r_j^{m-1}$ and passing to the limit gives
\[
 \Im F_m=0\qquad\hbox{on }T_0.
\]

Every Fourier frequency occurring in $P_m\ol z+Q_m\ol w$ on $T_0$ has component sum $m-1>0$. A real-valued function must contain the opposite conjugate of each nonzero frequency, whose component sum would be $-(m-1)$; no such frequency occurs. Therefore
\[
 F_m\equiv0\qquad\hbox{on }T_0.
\]
Using
\[
 \ol z=\frac{\mu}{\lambda+\mu}\frac1z,
 \qquad
 \ol w=\frac{\lambda}{\lambda+\mu}\frac1w
\]
on $T_0$, we obtain
\[
 \mu wP_m+\lambda zQ_m\equiv0.
\]
Thus $X_m$ is a homogeneous holomorphic multiple of $X_0$, contradicting its choice. Hence $X\wedge X_0\equiv0$, and therefore
\[
 X=h(z,w)X_0
\]
for a holomorphic unit $h$. The foliation is analytically linear in the unitary coordinates. The explicit Clifford-torus formula above then holds for every sufficiently small sphere.
\end{proof}

\begin{corollary}[resonant subcase]\label{cor:resonant-first-integral}
Under the hypotheses of either Theorem~\ref{thm:main} or Theorem~\ref{thm:spherical-characterization}, if the resulting Siegel eigenvalue ratio is rational, then the germ of $\F$ admits a nonconstant holomorphic first integral.
\end{corollary}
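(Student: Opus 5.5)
Under either set of hypotheses, Theorem~\ref{thm:main} or Theorem~\ref{thm:spherical-characterization} already gives more than formal information. There are local holomorphic coordinates $(z,w)$ in which $\F$ is defined by
\[
 \mu w\,dz+\lambda z\,dw=0,\qquad \lambda,\mu>0.
\]
Equivalently, a generator has the form $h(z,w)\bigl(\lambda z\,\pd_z-\mu w\,\pd_w\bigr)$ with $h$ a holomorphic unit. So the plan is to read a first integral directly off this linear normal form, without any further normal-form theory.

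The Siegel eigenvalue ratio is $-\mu/\lambda$, which in the statement of Theorem~\ref{thm:main} is $\tau=-\mu/\lambda$ up to the convention for the ratio. Suppose it is rational. Then $\mu/\lambda=p/q$ with $p,q$ positive coprime integers, and after multiplying the $1$-form by a positive constant we may take $\mu=p$ and $\lambda=q$. The candidate first integral is
\[
 H(z,w)=z^{p}w^{q}.
\]
A direct computation gives
\[
 dH=z^{p-1}w^{q-1}\,(p\,w\,dz+q\,z\,dw),
\]
so $dH\wedge(\mu w\,dz+\lambda z\,dw)=0$. Thus $H$ is constant along the leaves of $\F$ away from the axes, and hence everywhere by continuity. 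Since $p,q\ge1$, $H$ is holomorphic, nonconstant, and vanishes at the origin.

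The only step that needs care is the transport back to the original coordinates. The coordinates $(z,w)$ come from a local biholomorphism $\Psi$. In the spherical case $\Psi$ is the chart in which the spheres are taken, composed with a unitary map; in the Reinhardt case it is the adapted chart of condition (1). In either case $H\circ\Psi$ is a nonconstant holomorphic first integral of the original germ. Invariance of the ratio under the positive rescaling is immediate, and that is where the rationality hypothesis enters. I expect no genuine obstacle beyond this bookkeeping.
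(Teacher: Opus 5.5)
Your proposal is correct and is essentially the paper's argument: take the linear normal form supplied by the theorem, normalize the Siegel ratio to $p/q$ with $p,q$ coprime positive integers, and pull back the monomial first integral. Your $z^{p}w^{q}$ for $p\,w\,dz+q\,z\,dw$ is the paper's $z^{q}w^{p}$ for $pz\,\partial_z-qw\,\partial_w$ up to relabeling; the only cosmetic difference is that you read the linear form off the ``Moreover'' clauses in the given (unitarily adjusted) coordinates rather than invoking an abstract linearizing map.
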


\begin{proof}
By the corresponding theorem, the foliation germ is analytically linearizable. After a constant rescaling and analytic change of coordinates, a generator is orbitally equivalent to
\[
 pz\frac{\pd}{\pd z}-qw\frac{\pd}{\pd w},
 \qquad p,q\in\mathbb N,\quad \gcd(p,q)=1.
\]
The monomial
\[
 F(z,w)=z^q w^p
\]
is a holomorphic first integral of the linear model, and its pullback by the analytic conjugacy is a holomorphic first integral of the original germ.
\end{proof}

\section{Proof of Theorem~\ref{thm:main}}

We first isolate the geometry of a strictly pseudoconvex Reinhardt boundary. Put
\[
 s=|z|^2,\qquad t=|w|^2.
\]
Since $D$ is pseudoconvex Reinhardt and contains the origin, it is complete Reinhardt. Consequently the portion of $\partial D$ in $\{zw\neq0\}$ can be written as
\begin{equation}\label{eq:reinhardt-profile}
 t=\varphi(s),\qquad 0<s<a,
\end{equation}
where $\varphi(s)>0$, $\varphi(0)>0$, and $\varphi(s)\to0$ as $s\to a^-$. We may use
\[
 \rho(z,w)=|w|^2-\varphi(|z|^2)
\]
as a defining function along this part of the boundary.

\begin{lemma}[Levi monotonicity of the Reinhardt profile]\label{lem:reinhardt-profile}
With the notation above, the function
\[
 h(s)=-\frac{s\varphi'(s)}{\varphi(s)},\qquad 0<s<a,
\]
is strictly increasing and maps $(0,a)$ onto $(0,+\infty)$.
\end{lemma}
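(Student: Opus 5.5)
The plan is to pass to logarithmic coordinates, where the Reinhardt domain becomes a tube and strict pseudoconvexity of $\partial D$ becomes strict convexity of its logarithmic image. Put $x=\log s$ and $y=\log t$. Since $D$ is complete Reinhardt and pseudoconvex, its logarithmic image $\{(x,y):(e^x,e^y)\in \ol D\}$ is convex, and it has strictly convex boundary near points of $\{zw\ne0\}$ because $\partial D$ is strictly pseudoconvex. On the relevant part of the boundary this image is the graph
\[
 y=\psi(x):=\log\varphi(e^x),\qquad -\infty<x<\log a.
\]
Differentiating,
\[
 \psi'(x)=\frac{e^x\varphi'(e^x)}{\varphi(e^x)}=-h(e^x).
\]
So $h$ is strictly increasing in $s$ exactly when $\psi'$ is strictly decreasing, that is, when $\psi$ is strictly concave. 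This is the expected shape: the region lies below the graph of $\psi$, and that region is convex.

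To justify strict concavity directly, I would compute the Levi form of $\rho=|w|^2-\varphi(|z|^2)$ on the complex tangent line. With $\rho_z=-\varphi'\ol z$, $\rho_w=\ol w$, $\rho_{z\ol z}=-\varphi'-s\varphi''$, $\rho_{w\ol w}=1$ and $\rho_{z\ol w}=0$, the complex tangent vector is $(\ol w,\varphi'\ol z)$. Substituting $t=\varphi$, the Levi form evaluates to
\[
 \varphi\bigl(-\varphi'-s\varphi''+s\varphi'^2/\varphi\bigr).
\]
Strict pseudoconvexity says this is positive, which is equivalent to $(s\varphi'/\varphi)'<0$, i.e. $h'>0$.

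I must check the sign convention. $D$ is $\{\rho<0\}$, i.e. $t<\varphi(s)$, so $\rho$ is a legitimate defining function. A routine check gives $\psi''(x)=s\,(s\varphi'/\varphi)'$, so $\psi''<0$, consistent with the convexity picture.

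It remains to determine the range of $h$. First, $h>0$ on $(0,a)$. By monotonicity and completeness, $\varphi$ is nonincreasing, and strict monotonicity then gives $h(s)>\lim_{s\to0^+}h(s)\ge0$. That limit is $0$: $\varphi$ is smooth up to $s=0$, since the boundary meets the $w$-axis smoothly at $|w|^2=\varphi(0)>0$, so $s\varphi'(s)\to0$.

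The main obstacle is the limit $h(s)\to+\infty$ as $s\to a^-$. Here $\varphi\to0$ while the boundary crosses the $z$-axis $\{w=0\}$ transversally at $|z|^2=a$. Near that point the boundary is smooth, with $|w|^2$ vanishing to first order in $a-s$. More precisely, the defining function $|w|^2-\varphi$ must correspond to a smooth defining function in which $t$ enters linearly, so $\varphi(s)\sim c(a-s)$ with $c>0$, or at least $-\varphi'$ is bounded below. I would establish this by writing the boundary near $(\sqrt a,0)$ as $s=\chi(t)$ with $\chi$ smooth and $\chi'(0)$ finite. If $\chi'(0)<0$, then $\varphi'(s)\to 1/\chi'(0)$, which is finite and nonzero, and hence
\[
 h(s)=-\frac{s\varphi'(s)}{\varphi(s)}\approx\frac{a}{a-s}\to\infty .
\]
If $\chi'(0)=0$, the growth is even faster. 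In either case $h$ is unbounded, and by continuity and strict monotonicity it maps $(0,a)$ onto $(0,+\infty)$.
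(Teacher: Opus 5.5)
Your proof is correct, and the monotonicity part is the same as the paper's. You use the same defining function $\rho=|w|^2-\varphi(|z|^2)$, the same complex tangent vector $(\ol w,\varphi'\ol z)$, and the identity that the Levi form equals $\varphi^2h'$ (your condition $(s\varphi'/\varphi)'<0$). The log-convexity picture is a pleasant interpretation ($\psi''=-s\,h'$) but is not needed, and the limit as $s\to0^+$ is handled identically.

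The genuine difference is the endpoint $s\to a^-$. The paper uses nothing but $\varphi(s)\to0$: if $h\le C$ near $a$, then $-\varphi'/\varphi\le C/s$. Integrating from a fixed $s_1$ gives $\varphi(s)\ge\varphi(s_1)(s_1/a)^C>0$, a contradiction. You instead use the regularity of $\partial D$ at the axis point $(\sqrt a,0)$ and write the boundary there as $s=\chi(t)$. Then $h=-\chi(t)/(t\chi'(t))\to+\infty$, because $\chi(t)\to a$ while $t\chi'(t)\to0$.

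Your route gives the sharper asymptotics $h\approx a/(a-s)$. It costs an extra input that the paper avoids: $\chi$ must have bounded derivative in $t=|w|^2$, not merely in $|w|$. This does follow from $\T^2$-invariance, since an even $C^2$ function $g(|w|)$ gives $\chi'(t)=g'(\sqrt t)/(2\sqrt t)\to g''(0)/2$.

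Your case $\chi'(0)=0$ cannot actually occur. At $(\sqrt a,0)$ the complex tangent line is spanned by $(0,1)$, and the Levi form of $|z|^2-\chi(|w|^2)$ there is $-\chi'(0)$, so strict pseudoconvexity forces $\chi'(0)<0$. Your remark that the growth is ``even faster'' in that case is also not accurate in terms of $a-s$, though unboundedness is all you need.
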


\begin{proof}
A complex tangent vector to $\rho=0$ is proportional to
\[
 \xi=(\ol w,\varphi'(s)\ol z).
\]
Since
\[
 \rho_{z\ol z}=-\varphi'(s)-s\varphi''(s),
 \qquad
 \rho_{w\ol w}=1,
\]
the Levi form along $\xi$ is
\[
 \mathcal L_\rho(\xi)
 =\varphi(s)\bigl[-\varphi'(s)-s\varphi''(s)\bigr]
   +s(\varphi'(s))^2.
\]
Direct differentiation gives the exact identity
\begin{equation}\label{eq:levi-h}
 \mathcal L_\rho(\xi)=\varphi(s)^2h'(s).
\end{equation}
Strict pseudoconvexity therefore gives $h'(s)>0$.

At the $w$-axis, smoothness gives bounded $\varphi'$ while $\varphi(0)>0$, and hence
\[
 \lim_{s\to0^+}h(s)=0.
\]
At the other endpoint, $\varphi(s)\to0$ as $s\to a^-$. If $h$ were bounded above by a constant $C$ near $a$, then
\[
 -\frac{\varphi'(s)}{\varphi(s)}\leq\frac{C}{s}.
\]
Integration on a terminal interval would give a positive lower bound for $\varphi(s)$ as $s\to a^-$, a contradiction. Thus
\[
 \lim_{s\to a^-}h(s)=+\infty.
\]
Together with strict monotonicity this proves the lemma.
\end{proof}

\begin{lemma}[fixed-coordinate Reinhardt rigidity]\label{lem:reinhardt-rigidity}
Let $X$ be a holomorphic vector-field germ at $0\in\C^2$ with isolated nondegenerate singularity, and suppose that in the chosen coordinates its linear part is diagonal,
\[
 X_1=\alpha z\frac{\pd}{\pd z}+\beta w\frac{\pd}{\pd w},
 \qquad \alpha\beta\ne0.
\]
Assume that there exists a sequence $r_j\downarrow0$ such that, for every $j$, the complex tangency locus $M(X,\partial(r_jD))$ is a nonempty smooth real surface of dimension two. Then
\[
 \frac{\alpha}{\beta}\in\R_{<0},
\]
and, after multiplication of $X$ by a nonzero constant,
\[
 X=h_0(z,w)\left(
 \lambda z\frac{\pd}{\pd z}-\mu w\frac{\pd}{\pd w}
 \right),
 \qquad \lambda,\mu>0,
\]
for a holomorphic unit $h_0$.
\end{lemma}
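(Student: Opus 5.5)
The plan is to follow the spherical proof in two stages. First I identify the linear part from the rescaled limit. Then I kill the nonlinear terms with a Fourier-frequency argument on the limiting Reinhardt torus.

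\emph{Rescaling.} Put $X_r(\zeta)=r^{-1}X(r\zeta)$, so that $X_r\to X_1$ in $C^1$ near $\partial D$. The tangency loci $M(X,\partial(rD))$ are homothetic to $M(X_r,\partial D)$. On the part of $\partial D$ with $zw\ne0$ I use the defining function $\rho=|w|^2-\varphi(|z|^2)$ and set $g_r=X_r(\rho)$. For the linear part,
\[
 g_0=\beta|w|^2-\alpha\varphi'(s)|z|^2=\varphi(s)\bigl(\beta+\alpha h(s)\bigr).
\]
This uses $t=\varphi(s)$ on the boundary, and $h$ is the function of Lemma~\ref{lem:reinhardt-profile}.

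I need an analogue of Lemma~\ref{lem:spherical-rank} for $\partial D$: at every tangency point, $\ol X(X\rho)=\mathcal L_\rho(X,X)>0$. This holds because $\mathcal L_\rho(X,X)=\sum\rho_{j\bar k}P_j\ol P_k$ and, at a tangency point, $X$ is a complex tangent vector, so strict pseudoconvexity gives positivity. Hence $dg_r|_{T\partial D}\ne0$ at tangency points. With this, the argument of Lemma~\ref{lem:no-collapse} carries over verbatim and shows that $g_0^{-1}(0)$ contains a smooth surface. Since $g_0$ is $\T^2$-invariant, its zero set is a union of tori $\{s=s_0\}$ where $\beta+\alpha h(s_0)=0$, possibly together with pieces on the coordinate circles. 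The coordinate circles are one-dimensional, so they are excluded. The imaginary part of $\beta+\alpha h(s)=0$ forces $\alpha/\beta\in\R$ because $h>0$, and the real part then forces $\alpha/\beta=-1/h(s_0)<0$. By the strict monotonicity and surjectivity in Lemma~\ref{lem:reinhardt-profile}, $s_0$ exists and is unique. Moreover $g_0$ is transverse there: its derivative in $s$ is $\varphi\alpha h'\ne0$. So the limit torus $T_0$ is the unique Reinhardt torus.

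\emph{Nonlinear terms.} After multiplying by a constant, $X_1=\lambda z\partial_z-\mu w\partial_w$. As in the proof of Theorem~\ref{thm:spherical-characterization}, assume $X\wedge X_1\not\equiv0$. Multiply by a unit to get $X=X_1+X_m+O(m+1)$ with $X_m$ not a homogeneous multiple of $X_1$. Then
\[
 g_r=g_0+r^{m-1}F_m+O(r^m),\qquad F_m=X_m(\rho)=P_m\,\rho_z+Q_m\,\rho_w .
\]
Here $\rho_z=-\varphi'(s)\ol z$ and $\rho_w=\ol w$.

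Choose a unimodular constant $c$ such that $\Re(c g_0)$ has nonzero differential transverse to $T_0$; this is possible because $g_0$ is real up to a constant factor. By transversality, for small $r$ the set $\{\Re(cg_r)=0\}$ near $T_0$ is a single graph $\Sigma_r\simeq\T^2$. I must also exclude solutions away from $T_0$, by compactness: away from $T_0$ and the coordinate circles, $g_0\ne0$, while near the coordinate circles the limit-surface argument gives a contradiction. The actual two-dimensional compact tangency set is then open and closed in $\Sigma_{r_j}$, so it equals $\Sigma_{r_j}$. Dividing by $r_j^{m-1}$ yields $\Im(cF_m)=0$ on $T_0$.

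On $T_0$, $s$ is fixed, so $\varphi'(s)$ is a constant. The function $F_m$ is a trigonometric polynomial in the angles whose frequencies all have component sum $m-1>0$. A real function must contain the conjugate frequency of each nonzero frequency it contains, so $cF_m\equiv0$ on $T_0$. Substituting $\ol z=s_0/z$ and $\ol w=t_0/w$ and using the holomorphic identity principle from the torus gives $-\varphi'(s_0)s_0\,wP_m+t_0\,zQ_m\equiv0$. By the choice of $s_0$ this is proportional to $\mu wP_m+\lambda zQ_m$. Hence $X_m$ is a holomorphic multiple of $X_1$, which is a contradiction. Therefore $X=h_0X_1$.

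\emph{Main obstacle.} The hardest step is the global uniqueness of the graph $\Sigma_r$, that is, excluding tangency components that escape toward the coordinate circles where the profile parametrization degenerates. There I would use a local defining function of $\partial D$ near $\{zw=0\}$, where $g_0\ne0$ on the coordinate circles since $\alpha\beta\ne0$ and $h\to0$ or $h\to\infty$ there, and argue by compactness.
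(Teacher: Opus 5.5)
Your proof follows the paper's argument. You rescale to the fixed boundary $\partial D$, locate the unique transverse zero torus $T_0$ of $X_1(\rho)=\varphi(s)(\beta+\alpha h(s))$ via Lemma~\ref{lem:reinhardt-profile}, and isolate the first non-orbital term $X_m$. You then show that the tangency set fills the graph $\{\Re g_r=0\}\simeq\T^2$ and kill $F_m$ on $T_0$ by the frequency-sum argument. Your treatment of the coordinate circles (a local defining function there gives $X_1\rho\neq0$, then compactness) is also what the paper does.

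One step needs correcting: your claim that Lemma~\ref{lem:no-collapse} carries over verbatim. Its final step applies the real-analytic identity principle on $\Sigma_{r_j}$. That is unavailable here, because $\partial D$ is only assumed $C^\infty$, so $g_r$ and $\Sigma_r$ are merely smooth. In your first stage, the exclusion of the coordinate circles ("they are one-dimensional, so they are excluded") depends on that limit surface, so as written that part is unsupported.

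The detour is unnecessary, though, and the fix is already in your last paragraph. As in the paper, a single limit point $p_0$ of rescaled tangency points satisfies $X_1(\rho)(p_0)=0$. Since $X_1\rho\neq0$ on the coordinate circles, $p_0$ lies off the axes. Then $\beta+\alpha h(s_0)=0$ with $h(s_0)>0$ already gives $\alpha/\beta=-1/h(s_0)<0$. From there, the uniqueness and transversality of the zero torus of $g_0$ supply everything the second stage needs.
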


\begin{proof}
Rescale $z=r\zeta$, $w=r\omega$. The vector field
\[
 \widetilde X_r=r^{-1}X(r\zeta,r\omega)
\]
converges in $C^1$ on compact sets to $X_1$. Choose a tangency point on each $\partial(r_jD)$ and rescale it to $p_j\in\partial D$. Passing to a subsequence, $p_j\to p_0\in\partial D$, and
\[
 X_1(\rho)(p_0)=0
\]
for any local Reinhardt defining function $\rho$.

The point $p_0$ cannot lie on a coordinate axis. For example, at a point $(0,w)$ of $\partial D$ with $w\ne0$, Reinhardt invariance gives $\rho_z=0$, while regularity of the boundary gives $\rho_w\ne0$; hence $X_1(\rho)=\beta w\rho_w\ne0$. The argument on the other axis is identical. Thus $p_0$ has both coordinates nonzero. Writing locally $\rho=\rho(s,t)$, the complete Reinhardt property together with strict pseudoconvexity (equivalently, with the monotonicity of the profile established in Lemma~\ref{lem:reinhardt-profile}) gives $\rho_s,\rho_t>0$ along the positive part of the boundary, after choosing the outward orientation of $\rho$, and therefore
\[
 \alpha s\rho_s+\beta t\rho_t=0
\]
at $p_0$. It follows that
\[
 \frac{\alpha}{\beta}=-\frac{t\rho_t}{s\rho_s}\in\R_{<0}.
\]
After multiplication by a nonzero constant we may write
\[
 X_1=X_0=\lambda z\frac{\pd}{\pd z}-\mu w\frac{\pd}{\pd w},
 \qquad \lambda,\mu>0.
\]

For the profile~\eqref{eq:reinhardt-profile},
\begin{equation}\label{eq:linear-reinhardt-tangency}
 X_0(\rho)
 =-\lambda s\varphi'(s)-\mu\varphi(s)
 =\varphi(s)\bigl(\lambda h(s)-\mu\bigr).
\end{equation}
By Lemma~\ref{lem:reinhardt-profile}, this vanishes on exactly one Reinhardt torus
\[
 T_D=\{|z|^2=s_0,\ |w|^2=t_0\},
 \qquad t_0=\varphi(s_0),
\]
where $h(s_0)=\mu/\lambda$. The zero is transverse, since
\begin{equation}\label{eq:transverse-reinhardt-zero}
 \frac{d}{ds}X_0(\rho)\Big|_{s=s_0}
 =\lambda\varphi(s_0)h'(s_0)>0.
\end{equation}

Suppose now, toward a contradiction, that $X\wedge X_0\not\equiv0$. Multiplication by a holomorphic unit does not change the complex tangency set. We may therefore remove, one degree at a time, every homogeneous term which is a holomorphic multiple of $X_0$ before the first degree at which the wedge with $X_0$ is nonzero. Since only finitely many degrees precede that first degree, this requires only finitely many unit multiplications. Thus, for some $m\ge2$,
\[
 X=X_0+X_m+O(m+1),
 \qquad
 X_m=P_m(z,w)\frac{\pd}{\pd z}+Q_m(z,w)\frac{\pd}{\pd w},
\]
where $P_m,Q_m$ are homogeneous of degree $m$ and $X_m$ is not a homogeneous holomorphic multiple of $X_0$.

On the fixed boundary $\partial D$, after rescaling, put
\[
 G_r=\widetilde X_r(\rho).
\]
Then
\begin{equation}\label{eq:Gr-reinhardt}
 G_r=G_0+r^{m-1}F_m+O(r^m),
\end{equation}
where
\[
 G_0=X_0(\rho)
\]
and
\[
 F_m=\rho_sP_m\ol z+\rho_tQ_m\ol w.
\]
The convergence is uniform with first derivatives on $\partial D$.

By~\eqref{eq:linear-reinhardt-tangency}--\eqref{eq:transverse-reinhardt-zero}, $G_0$ has exactly one zero torus $T_D$ and is transverse to zero there. Choose a small Reinhardt tubular neighborhood $U$ of $T_D$. On the compact set $\partial D\setminus U$, $|G_0|$ is bounded below by a positive constant, so for sufficiently small $r$ the equation $\Re G_r=0$ has no solutions there. Inside $U$, the implicit-function theorem and the uniform nonvanishing radial derivative show that $\Re G_r=0$ is a unique connected global graph
\[
 \Sigma_r\simeq\T^2
\]
over $T_D$.

For $r=r_j$, the tangency locus
\[
 M_r=\{G_r=0\}
\]
is, by hypothesis, a nonempty smooth compact real surface of dimension two contained in the connected two-manifold $\Sigma_r$. Its inclusion is locally open, while compactness makes it closed. Hence
\[
 M_r=\Sigma_r.
\]
In particular, $\Im G_r=0$ on the whole graph. Dividing the imaginary part of~\eqref{eq:Gr-reinhardt} by $r_j^{m-1}$ and letting $j\to\infty$ gives
\[
 \Im F_m=0
\]
on $T_D$.

On $T_D$ the quantities
\[
 c_1=\rho_s(s_0,t_0),\qquad c_2=\rho_t(s_0,t_0)
\]
are positive constants, so
\[
 F_m=c_1P_m\ol z+c_2Q_m\ol w.
\]
Every Fourier frequency occurring in $P_m\ol z$ or $Q_m\ol w$ has component sum $m-1>0$. A real-valued function on $\T^2$ must contain with each frequency its opposite conjugate frequency, whose component sum would be $-(m-1)$. No such frequencies occur. Therefore
\[
 F_m\equiv0\qquad\text{on }T_D.
\]
Since on $T_D$
\[
 \ol z=\frac{s_0}{z},\qquad \ol w=\frac{t_0}{w},
\]
we obtain
\[
 c_1s_0\,wP_m+c_2t_0\,zQ_m=0
\]
on $T_D$, hence identically as a holomorphic polynomial. The linear tangency relation on $T_D$ is
\[
 \lambda c_1s_0=\mu c_2t_0,
\]
and consequently
\[
 \mu wP_m+\lambda zQ_m\equiv0.
\]
Since $z$ and $w$ are relatively prime,
\[
 P_m=zR_{m-1},\qquad
 Q_m=-\frac{\mu}{\lambda}wR_{m-1}
\]
for a homogeneous polynomial $R_{m-1}$. Thus $X_m=(R_{m-1}/\lambda)X_0$, contradicting the choice of $m$. Hence
\[
 X\wedge X_0\equiv0.
\]
Writing $X=P\partial_z+Q\partial_w$, this identity is
\[
 \mu wP+\lambda zQ=0.
\]
Putting successively $z=0$ and $w=0$ gives $z\mid P$ and $w\mid Q$, and therefore
\[
 X=h_0(z,w)X_0
\]
with $h_0$ holomorphic. Comparison of linear parts gives $h_0(0)=1$, so $h_0$ is a unit.
\end{proof}

\begin{proof}[Proof of Theorem~\ref{thm:main}]
Assume first that condition \textup{(2)} holds and choose analytic linearizing coordinates $(z,w)$. In those coordinates the foliation is generated by
\[
 X_0=\lambda z\frac{\pd}{\pd z}-\mu w\frac{\pd}{\pd w},
 \qquad \lambda,\mu>0.
\]
These coordinates are automatically adapted to the eigendirections of the linear part. Lemma~\ref{lem:reinhardt-profile} and equation~\eqref{eq:linear-reinhardt-tangency} show that $X_0$ has exactly one smooth Reinhardt tangency torus on $\partial D$. By homogeneity, its tangency locus on $\partial(rD)$ is exactly the homothetic torus $rT_D$ for every sufficiently small $r$. Thus \textup{(1)} holds.

Conversely, assume \textup{(1)} and use the coordinates occurring there. Choose a holomorphic generator $X$ of $\F$. Its linear part is diagonal, so Lemma~\ref{lem:reinhardt-rigidity} gives
\[
 X=h_0(z,w)\left(
 \lambda z\frac{\pd}{\pd z}-\mu w\frac{\pd}{\pd w}
 \right)
\]
with $h_0$ a holomorphic unit. Thus the foliation itself is linear Siegel in those coordinates, proving \textup{(2)} and the final assertion.
\end{proof}

\section{Cartesian one-sphere rigidity}

We next isolate the elementary Cartesian extreme of the one-sphere problem.

\begin{proposition}[Cartesian one-sphere rigidity]\label{prop:cartesian}
Let $\Om=D_1\times D_2\subset\C^2$ be a Cartesian product of bounded connected planar domains containing $0$, and suppose
\[
 \partial D_1\times\partial D_2\subset S^3(R).
\]
Let $\F$ be holomorphic near $\ol\Om$ and complex tangent to $S^3(R)$ along $\partial D_1\times\partial D_2$. Then there exist $a,b>0$, $a^2+b^2=R^2$, such that
\[
 D_1=\{|x|<a\},\qquad D_2=\{|y|<b\},
\]
and, for a local generator $X$ of $\F$ near $0$,
\[
 X=h(x,y)\left(b^2x\frac{\pd}{\pd x}-a^2y\frac{\pd}{\pd y}\right)
\]
with $h$ holomorphic; if $0$ is an isolated nondegenerate singularity, then $h$ is a unit.
\end{proposition}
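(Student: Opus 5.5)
The plan has three steps: pin down the product geometry, read off the tangency equation on the product torus, and then propagate it holomorphically.

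\emph{Geometry of the product.} Since $D_1,D_2$ are bounded, their boundaries $\partial D_1,\partial D_2$ are nonempty. Fix $y_0\in\partial D_2$. Then every $x\in\partial D_1$ satisfies $|x|^2=R^2-|y_0|^2$, so $\partial D_1$ lies on a single circle $\{|x|=a\}$. Symmetrically, $\partial D_2\subset\{|y|=b\}$, and $a^2+b^2=R^2$. Both radii are positive, because a bounded domain containing $0$ cannot have its boundary reduced to the origin.

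Next, $D_1$ is a bounded connected open set containing $0$ whose boundary lies in $\{|x|=a\}$. I claim $D_1=\{|x|<a\}$.
\begin{itemize}
\item Because $D_1\cap\{|x|<a\}$ has no boundary points inside the open disc, it is relatively closed there. The disc is connected and the intersection contains $0$, so the intersection is the whole disc.
\item If $D_1$ contained a point with $|x|>a$, the same clopen argument in the connected unbounded region $\{|x|>a\}$ would force $D_1\supset\{|x|>a\}$, contradicting boundedness.
\item Since $D_1$ is open and $\partial D_1\subset\{|x|=a\}$, it cannot contain points of the circle itself (such a point would be interior, yet nearby exterior points are excluded). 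Thus $D_1$ is exactly the disc.
\end{itemize}
The same argument gives $D_2=\{|y|<b\}$. It follows that $\partial D_1\times\partial D_2$ is the full Clifford torus $T=\{|x|=a,\ |y|=b\}$.

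\emph{Tangency equation on $T$.} Write a generator $X=P\partial_x+Q\partial_y$, holomorphic near $\ol\Om$; I will first argue as if there is a single global generator and return to this point below. Complex tangency along $T$ means
\[
 P\ol x+Q\ol y=0\quad\text{on }T.
\]
Using $\ol x=a^2/x$ and $\ol y=b^2/y$ on $T$, this becomes
\[
 a^2yP+b^2xQ=0\quad\text{on }T.
\]
The left side is holomorphic on a neighborhood of $\ol\D_a\times\ol\D_b$, and $T$ is the distinguished boundary of this bidisc. Expanding in the Cauchy integral on the torus, or equivalently comparing Fourier coefficients, shows that a function holomorphic on the closed bidisc and vanishing on its distinguished boundary vanishes identically. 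Hence
\[
 a^2yP+b^2xQ\equiv0\quad\text{on }\Om.
\]

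\emph{Factorization.} Setting $x=0$ gives $a^2yP(0,y)\equiv0$, so $P(0,y)\equiv0$ and $x\mid P$. In the same way, $y\mid Q$. Write $P=x\,p$ and $Q=y\,q$; substituting, $xy(a^2p+b^2q)=0$, so $a^2p=-b^2q$. Putting $h=p/b^2$ gives
\[
 X=h(x,y)\left(b^2x\frac{\pd}{\pd x}-a^2y\frac{\pd}{\pd y}\right).
\]
If $0$ is an isolated nondegenerate singularity, the linear part of $X$ is $h(0)$ times an invertible diagonal matrix. Nondegeneracy therefore forces $h(0)\neq0$, so $h$ is a unit.

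The main obstacle is the generator issue. A foliation holomorphic near $\ol\Om$ may have only local generators on overlapping sets. I would handle this by noting that near any point of $\Om$ where $\F$ is regular the argument is purely local, and by using that $\ol\Om$, a closed bidisc, is Stein and contractible, so a saturated foliation there is defined by a global vector field with isolated zeros (Cousin II holds on a neighborhood of the bidisc). If that route is inconvenient, one can instead work with a holomorphic $1$-form $\omega=A\,dx+B\,dy$ defining $\F$ near $\ol\Om$, whose existence follows from the same triviality of line bundles. The conclusion for the local generator at $0$ then follows by restriction.
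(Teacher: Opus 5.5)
Your proof is correct and follows essentially the same route as the paper: show that $D_1,D_2$ are centered discs, turn the tangency equation on the Clifford torus into the identity $a^2yP+b^2xQ=0$, extend it by the distinguished-boundary Fourier argument, and factor. The paper obtains that identity as $X\wedge Y=0$ with $Y=b^2x\pd_x-a^2y\pd_y$, which is the same computation as your substitution $\ol x=a^2/x$, $\ol y=b^2/y$. Your clopen argument for $D_1=\{|x|<a\}$ and your observation that a neighborhood of the closed bidisc carries a global generator (line bundles are trivial on a Stein contractible bidisc) fill in two points the paper passes over without comment.
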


\begin{proof}[Proof of Proposition~\ref{prop:cartesian}]

We first record the elementary tangency equation. If
\[
 X=P(x,y)\frac{\pd}{\pd x}+Q(x,y)\frac{\pd}{\pd y},
\]
then $X$ is complex tangent to $S^3(R)$ at $(x,y)$ precisely when
\begin{equation}\label{eq:tangency}
 \ol{x}P(x,y)+\ol{y}Q(x,y)=0.
\end{equation}

Since
\[
 \pd D_1\times\pd D_2\subset S^3(R),
\]
fixing $y_0\in\pd D_2$ shows that $|x|$ is constant on $\pd D_1$, and fixing $x_0\in\pd D_1$ shows that $|y|$ is constant on $\pd D_2$. Hence, for some $a,b>0$ with $a^2+b^2=R^2$,
\[
 D_1=\{|x|<a\},\qquad D_2=\{|y|<b\},
\]
and
\[
 T=\{|x|=a,\ |y|=b\}.
\]
Consider
\[
 Y=b^2x\frac{\pd}{\pd x}-a^2y\frac{\pd}{\pd y}.
\]
Both $X$ and $Y$ are complex tangent to $S^3(R)$ along $T$. Since $T_p^{\C}S^3(R)$ is one-dimensional, $X$ and $Y$ are collinear there. Thus the holomorphic function
\[
 X\wedge Y=-a^2yP-b^2xQ
\]
vanishes on $T$.

A holomorphic function vanishing on the Clifford torus $T$ vanishes identically. Indeed, if
\[
 F(x,y)=\sum_{m,n\ge0}c_{mn}x^my^n,
\]
then on $x=ae^{i\theta}$, $y=be^{i\varphi}$ the vanishing of $F$ and uniqueness of the double Fourier series give $c_{mn}=0$ for all $m,n$. Hence
\[
 a^2yP+b^2xQ\equiv0.
\]
Putting $x=0$ and $y=0$ gives $P=xp$ and $Q=yq$, with $p,q$ holomorphic, and therefore
\[
 a^2p+b^2q=0.
\]
Thus
\[
 P=b^2xh,\qquad Q=-a^2yh
\]
for a holomorphic function $h$. Nondegeneracy gives $h(0)\neq0$, so $h$ is a unit after shrinking the neighborhood. This proves the proposition.
\end{proof}

\section{Proofs of the one-boundary rigidity results}

\begin{proof}[Proof of Theorem~\ref{thm:polynomial}]
Write
\[
 \Phi(z,w)=\sum_{0\le j\le M,\,0\le k\le N}v_{jk}z^jw^k,
 \qquad v_{jk}\in\C^2,
\]
where $(M,N)$ is the bidegree. Since $\Phi(0)=0$, $v_{00}=0$, and since $D\Phi(0)$ is invertible,
\[
 p:=v_{10},\qquad q:=v_{01}
\]
form a basis of $\C^2$. The spherical boundary identity
\begin{equation}\label{eq:sphere}
 |\Phi(z,w)|^2=R^2,\qquad (z,w)\in\T^2,
\end{equation}
gives, for every nonzero $(\alpha,\beta)\in\mathbb Z^2$,
\begin{equation}\label{eq:fourier}
 \sum_{j,k}\ip{v_{j+\alpha,k+\beta}}{v_{jk}}=0,
\end{equation}
with coefficients outside the support understood to vanish.

Define the reflected polynomial vector
\[
 \Phi^{\#}(z,w)=z^Mw^N\,\ol{\Phi(1/\ol z,1/\ol w)}
\]
and the polynomial one-form
\[
 \Omega=\Phi^{\#}\cdot d\Phi.
\]
On $\T^2$ one has
\[
 \Phi^{\#}\cdot\Phi=z^Mw^N|\Phi|^2=R^2z^Mw^N.
\]
Both sides are polynomials, so uniqueness on the totally real torus gives the polynomial identity
\begin{equation}\label{eq:reflected-identity}
 \Phi^{\#}\cdot\Phi=R^2z^Mw^N.
\end{equation}
This identity also controls saturation. If a nonconstant polynomial $d$ divides both components of $\Phi^{\#}$, then~\eqref{eq:reflected-identity} gives $d\mid z^Mw^N$. Thus every irreducible factor of $d$ is either $z$ or $w$. By the definition of the bidegree, at least one component of $\Phi$ contains a monomial of $z$-degree $M$, so at least one component of $\Phi^{\#}$ has a term not divisible by $z$; hence $z$ is not a common factor. Similarly $w$ is not a common factor. Therefore the two components of $\Phi^{\#}$ are relatively prime.

The coefficient vector of $\Omega$ is $(D\Phi)^t\Phi^{\#}$. Since $D\Phi(0)$ is invertible, $D\Phi$ is invertible over the local holomorphic ring at the origin. Consequently the coefficients of $\Omega$ generate the same local ideal as the components of $\Phi^{\#}$, and $\Omega$ is already saturated near $0$.

On $\T^2$, $\Omega$ is a nonzero scalar multiple of the pullback of the complex contact form of the sphere. Since $\Phi^*\F$ defines the same line field there, the wedge of a holomorphic defining form of $\Phi^*\F$ with $\Omega$ vanishes on the maximally totally real torus $\T^2$, hence identically. Thus the foliation defined by $\Omega$ is precisely $\Phi^*\F$ near the origin.

If $v_{MN}\neq0$, then $\Phi^{\#}(0)=\ol{v_{MN}}\neq0$, and therefore $\Omega(0)\neq0$ because $D\Phi(0)$ is invertible. This contradicts the fact that $0$ is a singular point of $\Phi^*\F$. Hence
\[
 v_{MN}=0.
\]
Set
\[
 u=v_{M-1,N},\qquad v=v_{M,N-1}.
\]
These are precisely the coefficients determining the linear part of $\Phi^{\#}$. If $u=v=0$, then $\Phi^{\#}$, and hence the already saturated form $\Omega$, has order at least two at the origin. The induced foliation would therefore have a degenerate singularity, contrary to the hypotheses. Thus at least one of $u,v$ is nonzero, and the linear part of $\Omega$ is
\[
 \Omega_1=(\ol u z+\ol v w)\cdot(p\,dz+q\,dw).
\]
Put
\[
 A=\ip{p}{u},\quad B=\ip{q}{u},\quad
 C=\ip{p}{v},\quad D=\ip{q}{v}.
\]
The Fourier relation~\eqref{eq:fourier} for the mode $(2-M,-N)$ gives $A=0$: indeed, the only potentially nonzero contribution is $\ip{p}{u}$, the two endpoint contributions involving $v_{00}$ and $v_{MN}$ being zero. Similarly the mode $(-M,2-N)$ gives $D=0$. Finally, if $(M,N)\neq(1,1)$, the nonzero mode $(1-M,1-N)$ gives
\[
 B+C=0,
\]
because the only surviving terms are $\ip{q}{u}$ and $\ip{p}{v}$. Consequently
\[
 \Omega_1=B(z\,dw-w\,dz).
\]
Since the singularity is nondegenerate, $B\neq0$. An annihilating vector field has radial linear part
\[
 B\left(z\frac{\pd}{\pd z}+w\frac{\pd}{\pd w}\right),
\]
which is of Poincar\'e type and has eigenvalue ratio $+1$. This contradicts the Siegel hypothesis. Therefore
\[
 (M,N)=(1,1).
\]

In this remaining case $v_{11}=v_{MN}=0$, so
\[
 \Phi(z,w)=pz+qw.
\]
The Fourier mode $(1,-1)$ in~\eqref{eq:fourier} gives $\ip{p}{q}=0$. Since $p$ and $q$ form a basis, after a unitary target change we obtain
\[
 \Phi(z,w)=(az,bw),\qquad a,b\neq0.
\]
Thus $\Phi$ is linear. Moreover the pulled-back characteristic foliation is
\[
 |b|^2z\frac{\pd}{\pd z}-|a|^2w\frac{\pd}{\pd w},
\]
up to multiplication by a holomorphic unit. Hence $\F$ is analytically linearizable.
\end{proof}

\begin{proof}[Proof of Theorem~\ref{thm:logarithmic}]
Choose a smooth defining function $\rho$ for $D$ which is strictly plurisubharmonic in a collar of $\partial D$, and put
\[
 u=\rho\circ\Phi.
\]
Then $u$ is real valued, $u=0$ on $\T^2$, and $u\le0$ on $\ol{\D}^2$ by the hypothesis $\Phi(\ol{\D}^2)\subset\ol D$. Set, on $\T^2$,
\[
 A=z\,u_z,\qquad B=w\,u_w.
\]
Since $u$ is constant on $\T^2$, differentiation in the two angular variables gives
\[
 \Im A=\Im B=0,
\]
so $A$ and $B$ are real there.

We next prove positivity. Fix $w_0\in\T$ and consider the nonconstant holomorphic disc
\[
 \gamma_{w_0}:\D\longrightarrow\C^2,
 \qquad z\longmapsto\Phi(z,w_0).
\]
The function $v(z)=u(z,w_0)$ satisfies $v\le0$ on $\ol\D$ and $v=0$ on $\T$. Near the boundary circle the image of the disc lies in the collar where $\rho$ is strictly plurisubharmonic; hence $v$ is strictly subharmonic there. The boundary Hopf lemma gives a strictly positive outward radial derivative. Thus, for $|z|=1$,
\[
 \frac{d}{dt}\Big|_{t=1}u(tz,w_0)
 =zu_z+\ol z\,u_{\ol z}
 =2\Re A
 =2A>0.
\]
Therefore $A>0$ on $\T^2$. The same argument, fixing $z_0\in\T$ and using the discs $w\mapsto\Phi(z_0,w)$, gives $B>0$.

The $(1,0)$-part of $du$ on $\T^2$ is
\[
 u_z\,dz+u_w\,dw
 =A\frac{dz}{z}+B\frac{dw}{w}.
\]
Hence the pulled-back complex tangent line to $\partial D$ is defined on $\T^2$ by
\[
 A(z,w)\frac{dz}{z}+B(z,w)\frac{dw}{w}=0.
\]
Because $T\subset M(\F,\partial D)$, the logarithmic form
\[
 \omega=a(z,w)\frac{dz}{z}+b(z,w)\frac{dw}{w}
\]
defines the same line there. Consequently
\[
 aB-bA=0,
 \qquad
 \frac{a}{b}=\frac{A}{B}>0
 \quad\text{on }\T^2.
\]

The quotient $H=a/b$ is holomorphic on a neighborhood of $\ol{\D}^2$ and real valued on $\T^2$. For each fixed $w\in\T$, the imaginary part of $z\mapsto H(z,w)$ vanishes on $\T$, hence on $\D$; then fixing $z\in\D$ and repeating the argument in $w$ shows that $H$ is real valued on $\D^2$. A holomorphic real-valued function is constant, so
\[
 H\equiv c>0.
\]
Therefore
\[
 \omega=b(z,w)\left(c\frac{dz}{z}+\frac{dw}{w}\right).
\]
Since $b$ is a unit, $\G$ is the linear Siegel foliation generated by
\[
 z\frac{\pd}{\pd z}-c\,w\frac{\pd}{\pd w}.
\]
Transporting this foliation by $\Phi$ proves that the germ of $\F$ at $0$ is analytically linearizable of Siegel type.
\end{proof}

\section{Weighted-Hopf Models and Transversely Holomorphic Fillings}
In what follows, the word {\em flow} refers to a real flow. We record several facts about nonsingular transversely holomorphic flows on $S^3$, together with a filling construction for linear boundary dynamics. They provide a global framework for the invariant-torus geometry considered later. We begin with the linear Siegel model, which also separates the natural transverse-holomorphic structure inherited from the ambient foliation from alternative transverse-holomorphic structures carried by the desingularized real flow.

\begin{exe}[The linear Siegel model and the three boundary structures]
\label{section:siegellinear}
Consider
\[
 X_0=\lambda z\frac{\pd}{\pd z}-\mu w\frac{\pd}{\pd w},
 \qquad \lambda,\mu>0,
\]
and the round sphere $S^3(r)$. Since
\[
 X_0(|z|^2+|w|^2)=\lambda|z|^2-\mu|w|^2,
\]
the complex tangency set is the Clifford torus
\begin{equation}\label{eq:linear-torus}
 T(r)=\left\{|z|^2=\frac{\mu}{\lambda+\mu}r^2,\quad
 |w|^2=\frac{\lambda}{\lambda+\mu}r^2\right\}.
\end{equation}
On $S^3(r)\setminus T(r)$ the real intersection line $T\F\cap TS^3(r)$ is generated by
\[
 Y=\lambda\frac{\pd}{\pd\theta_z}-\mu\frac{\pd}{\pd\theta_w}.
\]
The same formula defines a nonsingular real-analytic flow on the whole sphere. Moreover, every Hopf torus
\[
 T_c=\{|z|^2=cr^2,\ |w|^2=(1-c)r^2\},\qquad 0<c<1,
\]
is invariant under $Y$; the torus in~\eqref{eq:linear-torus} is only the distinguished member on which the ambient complex foliation is tangent to the sphere. Thus the extended real flow has infinitely many invariant tori.

The natural transverse holomorphic structure induced by the ambient Siegel foliation on $S^3(r)\setminus T(r)$ does not extend through $T(r)$. Indeed, away from the coordinate axes a local logarithmic first integral is
\[
 H=\mu\log z+\lambda\log w.
\]
Writing $s=|z|^2$ on the sphere, the real part of $H$ is
\[
 R(s)=\frac{\mu}{2}\log s+\frac{\lambda}{2}\log(r^2-s),
\]
and
\[
 R'(s)=0
\]
exactly at $s=\mu r^2/(\lambda+\mu)$, i.e. on $T(r)$. Hence the natural transverse complex coordinate loses real rank precisely at the tangency torus.

Nevertheless, the globally extended real flow $Y$ does admit another transverse holomorphic structure. The diffeomorphism
\[
 \kappa(z,w)=(z,\ol w)
\]
conjugates $Y$ to the positive weighted Hopf flow
\[
 \lambda\frac{\pd}{\pd\theta_z}+\mu\frac{\pd}{\pd\theta_w},
\]
so pulling back its standard transverse holomorphic structure gives a global transverse holomorphic structure for $Y$. This example separates three different notions: extension of the underlying real flow, extension of the natural ambient transverse holomorphic structure, and existence of some global transverse holomorphic structure on the extended real flow.

When $\lambda/\mu\in\mathbb Q$, write $\lambda/\mu=p/q$ with coprime positive integers $p,q$. Then the flow of $Y$ is periodic on every invariant Hopf torus $T_c$. Hence Corollary~\ref{cor:local-th-periodic} is realized here by the familiar rational weighted-Hopf, or local Seifert, picture: a neighborhood of the distinguished tangency torus is foliated by closed circles and carries the alternative transversely holomorphic structure coming from the weighted-Hopf model. Thus the periodic tangency-torus result recovers locally, without assuming a first integral, the boundary geometry exhibited by the resonant linear Siegel singularity.

\end{exe}

\begin{lemma}[compact invariant surfaces]\label{lem:compact-surface}
Let $\mathcal L$ be a nonsingular flow on $S^3$. If $\Sigma\subset S^3$ is a compact embedded invariant surface, then $\Sigma$ is diffeomorphic to the 2-torus $T^2$.
\end{lemma}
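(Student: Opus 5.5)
The plan is to combine an Euler characteristic argument with the classification of compact surfaces. Since $\mathcal L$ is nonsingular, its generating vector field $V$ has no zeros. Because $\Sigma$ is invariant, $V$ is tangent to $\Sigma$ at every point. So $V|_\Sigma$ is a nowhere-vanishing tangent vector field on the closed surface $\Sigma$. By the Poincar\'e--Hopf theorem this gives $\chi(\Sigma)=0$. Among closed connected surfaces, only the torus and the Klein bottle have Euler characteristic zero. I would treat each connected component of $\Sigma$ separately, or simply assume $\Sigma$ is connected, as is implicit in the statement.

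The remaining step is to exclude the Klein bottle, and the tool is orientability. A compact embedded surface in $S^3$ is orientable. To justify this, I would first note that $\Sigma$ is closed and embedded in $S^3$, and $H_1(S^3;\mathbb Z/2)=0$. Alexander duality, or the standard separation argument, then shows that $\Sigma$ separates $S^3$ into two components. As a consequence, $\Sigma$ is two-sided, so its normal bundle is trivial. Since $S^3$ is orientable, a two-sided hypersurface is orientable. Alternatively, one can use the mod $2$ intersection pairing: a one-sided surface would carry a loop $\gamma$ that meets a parallel push-off of $\Sigma$ exactly once. That is impossible because $\gamma$ is null-homologous mod $2$ in $S^3$. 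Either route excludes the Klein bottle, leaving $\Sigma\cong T^2$.

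I expect the main obstacle to be the orientability step. Poincar\'e--Hopf is routine. The only real content is that a closed embedded surface in $S^3$ is two-sided, and I would state this via the $\mathbb Z/2$-intersection argument, taking care that it applies to smoothly embedded surfaces, which is the case here. One more point needs a remark: Poincar\'e--Hopf requires a smooth vector field. If the flow is only $C^0$ tangent, I would argue instead that a nonsingular continuous flow on a closed surface still forces $\chi=0$, using the Lefschetz fixed point theorem applied to the time-$t$ maps for small $t$. In our setting the flow is smooth, so the direct argument suffices.
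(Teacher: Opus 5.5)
Your proposal is correct and matches the paper's proof: Poincar\'e--Hopf applied to the nowhere-vanishing generator restricted to $\Sigma$ gives $\chi(\Sigma)=0$, and orientability of closed embedded surfaces in $S^3$ rules out the Klein bottle. The paper only asserts the orientability fact. Your separation and $\mathbb Z/2$-intersection justification, and your remark that connectedness is implicitly assumed, just fill in details the paper leaves unstated.
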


\begin{proof}
The generating vector field restricts to a nowhere-vanishing tangent vector field on $\Sigma$, so $\chi(\Sigma)=0$ by the Poincar\'e--Hopf theorem. Every closed embedded surface in $S^3$ is orientable. Hence $\Sigma$ is a closed orientable surface of Euler characteristic zero, and therefore $\Sigma\simeq T^2$.
\end{proof}

For comparison with the boundary-rigidity method of Brunella in \cite{Brunella1995}, we also record the following reconstruction mechanism in the everywhere-transverse situation.

\begin{lemma}[weighted-Hopf boundary reconstruction]\label{lem:hopf-boundary}
Let $\mathcal H$ be a nonsingular holomorphic foliation on a neighborhood of the sphere $S^3(R)$ which is everywhere transverse to $S^3(R)$. Assume that the induced transversely holomorphic flow on $S^3(R)$ is transversely holomorphically conjugate to the weighted Hopf flow induced by
\[
 Z=\lambda z\frac{\pd}{\pd z}+\mu w\frac{\pd}{\pd w},
 \qquad \lambda,\mu>0.
\]
Then, after shrinking the neighborhood of $S^3(R)$, $\mathcal H$ is defined by a closed meromorphic one-form. Its polar divisor is contained in the complex leaves through the two closed Hopf orbits corresponding, in the model, to $\{z=0\}\cap S^3(R)$ and $\{w=0\}\cap S^3(R)$.
\end{lemma}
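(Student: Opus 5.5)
The idea is to transport everything to the weighted Hopf model and write down an explicit closed meromorphic form there. This is in the spirit of Brunella's boundary rigidity.

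\emph{Step 1: the model.} Let $h:S^3(R)\to S^3(R)$ be the conjugacy, sending the induced flow of $\mathcal H$ to the flow induced by $Z$ and preserving transverse holomorphic structures. In the model, $Z$ is tangent to no sphere, and its transverse structure has a global meromorphic first-integral description. Away from the axes the closed form
\[
 \eta_0=\mu\frac{dz}{z}-\lambda\frac{dw}{w}
\]
defines the foliation of $Z$. Its restriction to $S^3(R)$ is a transversely holomorphic $1$-form for the induced flow: it is basic, closed, and of type $(1,0)$ in the transverse structure. Its poles lie exactly on the two closed Hopf orbits $\{z=0\}\cap S^3(R)$ and $\{w=0\}\cap S^3(R)$. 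Pulling back by $h$ gives a basic, transversely holomorphic, closed logarithmic form $\eta=h^*\eta_0$ on $S^3(R)$ minus the two corresponding closed orbits $\gamma_1,\gamma_2$ of the induced flow.

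\emph{Step 2: extension to a collar.} Because $\mathcal H$ is transverse to $S^3(R)$, a small collar $V$ of the sphere is foliated by local holomorphic leaves. Each leaf meets $S^3(R)$ in a real curve, namely an orbit of the induced flow. The local leaf spaces of $\mathcal H|_V$ are therefore identified holomorphically with the local transversals of the flow; this is exactly what the induced transverse holomorphic structure means. A basic transversely holomorphic form on $S^3(R)$ thus defines, via these local holomorphic submersions $V\to\C$, a holomorphic $1$-form $\tilde\eta$ on $V$ minus the leaves $L_1,L_2$ through $\gamma_1,\gamma_2$. This form vanishes on $T\mathcal H$ and is closed, since it is locally the pullback of a closed form on a one-dimensional leaf space. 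Near $\gamma_i$ the model shows that $\eta$ has a simple pole with residue $\mu$ or $-\lambda$ along the transverse coordinate. So $\tilde\eta$ is locally $c\,df/f+(\text{holomorphic})$, with $f$ a local defining function of $L_i$. This gives meromorphy and shows the polar divisor is contained in $L_1\cup L_2$.

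\emph{Main obstacle.} The delicate point is well-definedness of the local leaf-space identification in Step 2. Distinct plaques of $V$ through the same orbit must give the same germ, and the construction must be independent of choices around the compact orbits and the tori. I would first shrink $V$ so that each leaf of $\mathcal H|_V$ meets $S^3(R)$ in a connected single orbit. This uses transversality and compactness: the leaves in the collar are essentially products of an orbit with a small interval, via the normal flow along a real vector field transverse to the sphere and tangent to $\mathcal H$. After that shrinking, holonomy-invariance of $\eta$ makes the extension consistent. Closedness and the $(1,0)$ type then follow locally.
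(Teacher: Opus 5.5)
Your proposal is correct. It departs from the paper's proof at the two closed orbits and in how global consistency is secured.

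\textbf{Common part.} Like you, the paper pulls back $\omega_0=\mu\,dz/z-\lambda\,dw/w$ by the conjugacy to a basic transversely holomorphic form off the two closed orbits. It then extends this form through holomorphic flow boxes $(x,\zeta)$ as $a(\zeta)\,d\zeta$.

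\textbf{Gluing off the orbits.} The paper glues the local extensions by holomorphic uniqueness, since two of them agree on an open piece of the real hypersurface $S^3(R)$. This tacitly requires that plaques and overlap components meet the sphere suitably. Your product collar, built from the flow of a real field in $T\mathcal H$ transverse to the sphere, makes this explicit: each leaf of $\mathcal H|_V$ becomes orbit $\times$ interval, and holonomy invariance of $\eta$ gives consistency.

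\textbf{Near the closed orbits.} The paper invokes Brunella's suspension--extension argument to get a biholomorphic conjugacy between ambient neighborhoods of each closed orbit and the corresponding Hopf orbit. It pulls back $\omega_0$ there and asserts that choosing the suspension compatibly with the transverse conjugacy makes the local forms glue. You avoid this by using that the conjugacy is transversely biholomorphic at $\gamma_i$ as well. On a local leaf space through $\gamma_i$ this gives $\eta=\psi^*(c\,dw/w)=c\,d\zeta/\zeta+(\text{holomorphic})$, where $\psi(\zeta)=\zeta u(\zeta)$ with $u(0)\neq0$. The same flow-box extension then yields a simple pole along $L_i$, with no separate gluing step.

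\textbf{Comparison.} Your route is more elementary and self-contained, treats the whole sphere uniformly, and gives the residues $\mu$ and $-\lambda$ explicitly. The paper's route additionally produces an ambient local conjugacy with the model near the closed leaves, which is more than the lemma needs.

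\textbf{One point to add.} State that $\tilde\eta$ has no zeros, so that it genuinely defines $\mathcal H$. Off the poles this holds because the kernel of $\omega_0$ is exactly the model foliation. Near $L_i$ write $\tilde\eta=(c+\zeta b(\zeta))\,d\zeta/\zeta$ with $c\neq0$.
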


\begin{proof}
The linear model is defined by the closed meromorphic form
\[
 \omega_0=\mu\frac{dz}{z}-\lambda\frac{dw}{w},
 \qquad d\omega_0=0.
\]
Away from the two coordinate Hopf circles, the restriction of $\omega_0$ to the sphere is a basic transversely holomorphic one-form for the induced flow. Pulling it back by a transverse-holomorphic conjugacy gives a basic transversely holomorphic one-form $\alpha$ on the complement of the corresponding two closed orbits.

Let $(x,\zeta)$ be a holomorphic flow box for $\mathcal H$, so that the leaves are given by $\zeta=\mathrm{constant}$. Since $\mathcal H$ is transverse to the sphere, $\zeta|_{S^3(R)}$ is a genuine local transverse complex coordinate, and therefore
\[
 \alpha=a(\zeta)\,d\zeta
\]
with $a$ holomorphic. Thus $\alpha$ extends canonically to the ambient flow box as the closed holomorphic form $a(\zeta)d\zeta$. On overlaps, two such extensions agree on an open subset of the real-analytic hypersurface $S^3(R)$, hence agree identically by holomorphic uniqueness. They therefore glue to a closed holomorphic defining form near the sphere away from the two closed orbits.

Near either closed orbit, the transverse-holomorphic conjugacy identifies its holonomy germ with that of the corresponding Hopf orbit. We use here the standard suspension-extension argument in the form employed by Brunella: equality of the holomorphic holonomy germs yields a biholomorphic conjugacy between sufficiently small ambient neighborhoods of the corresponding closed boundary orbits; see~\cite[pp.~124--125]{Brunella1995}. Pulling back $\omega_0$ by this local biholomorphism gives a closed meromorphic defining form with a simple polar component along the complex leaf through the orbit. Choosing the suspension to agree with the given transverse conjugacy fixes the transverse normalization, so these local meromorphic forms glue with the form already constructed off the two closed orbits. This gives the desired closed meromorphic one-form on a full neighborhood of $S^3(R)$.
\end{proof}

\begin{remark}
Brunella's Hartogs--Levi extension step applies once a closed meromorphic defining form has been obtained on a neighborhood of the whole boundary; see~\cite[pp.~124--125]{Brunella1995}. Lemma~\ref{lem:hopf-boundary} concerns the everywhere-transverse setting. In the tangency-torus situations of Theorems~\ref{thm:main} and~\ref{thm:spherical-characterization}, the natural transverse holomorphic structure degenerates along the tangency torus, so the lemma cannot be applied directly.
\end{remark}

\begin{lemma}[invariant torus and weighted Hopf dynamics]\label{lem:torus-hopf}
Let $\mathcal L$ be a nonsingular transversely holomorphic flow on $S^3$. If $\mathcal L$ admits an invariant embedded two-torus, then $\mathcal L$ is transversely holomorphically conjugate to a weighted Hopf flow.
\end{lemma}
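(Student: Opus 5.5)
The plan is to reduce the lemma to the known classification of nonsingular transversely holomorphic flows on $S^3$ (Brunella, Ghys), and then use the invariant torus to rule out every model except the diagonal one with positive real eigenvalue ratio. By that classification, $\mathcal L$ is transversely holomorphically conjugate to the flow induced on $S^3$ by one of two kinds of vector field. The first kind is a diagonal linear field $Z=\lambda z\partial_z+\mu w\partial_w$ in the Poincar\'e domain, that is, with $0$ not in the closed segment $[\lambda,\mu]$. The second kind is a resonant Poincar\'e--Dulac normal form $Z=n z\partial_z+(w+z^n)\partial_w$. In every case the induced flow is the real line field $T\mathcal Z\cap TS^3$, which is transverse to $S^3$. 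The goal is to show that the presence of an invariant embedded torus $\Sigma$ forces $\lambda/\mu\in\R_{>0}$, which is exactly the weighted Hopf case. I will take this classification as an external input, in the same spirit as the citation of \cite{Brunella1995} in Lemma~\ref{lem:hopf-boundary}.

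\textbf{Step 1: dynamics of the non-Hopf models.} In the diagonal case with $\lambda/\mu\notin\R$, and in the Poincar\'e--Dulac case, I will show that the only closed orbits are the one or two coordinate circles, namely $S^3\cap\{z=0\}$ and $S^3\cap\{w=0\}$ (only the first one in the Poincar\'e--Dulac case). I will also show that every other orbit has its $\omega$-limit set equal to one of these circles. This follows from the explicit leaves $\{w=cz^{\mu/\lambda}\}$, respectively $\{w=z(\log z)/(2\pi i)+cz\}$: along a leaf, the ratio $|w|/|z|^{\Re(\mu/\lambda)}$ drifts monotonically once $\lambda/\mu$ is not a positive real. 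Since $\Sigma$ is compact and invariant, it contains the $\omega$-limit sets of its orbits, so $\Sigma$ must contain a closed orbit $\gamma$ of this type.

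\textbf{Step 2: a holonomy obstruction.} The holonomy of $\gamma$, read in a transverse holomorphic disc, is a germ $h(\zeta)=k\zeta+\cdots$. In the diagonal case, $k=e^{2\pi i\mu/\lambda}$ up to orientation, and $|k|\neq1$ because $\lambda/\mu\notin\R$. In the Poincar\'e--Dulac case $h$ is likewise a contraction. The trace $\Sigma\cap(\text{disc})$ is an $h$-invariant smooth real curve through $0$, so its tangent line must be invariant under multiplication by $k$, which forces $k\in\R$. I expect this step to be the main obstacle. When $\Im(\mu/\lambda)\neq0$ but $k$ happens to be real (for instance $\Re(\mu/\lambda)\in\tfrac12\mathbb Z$), the tangent-line argument alone does not decide, and one needs a second-order argument. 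My first attempt would be to linearize $h$ (it is a hyperbolic contraction, so Koenigs applies) and note that an invariant curve of $\zeta\mapsto k\zeta$ with $k$ real and $|k|\neq1$ is a union of rays. Then use the fact that the torus meets the transversal in a curve that closes up under the return map of the flow on $\Sigma$. Alternatively, I would argue globally: the nearby orbits on $\Sigma$ spiral toward $\gamma$, so the flow on $\Sigma$ has a Reeb-type annulus bounded by closed orbits. I would then compare this with the fact that the two possible closed orbits form a Hopf link with linking numbers incompatible with being parallel on $\Sigma$ while the transverse twisting is nonzero.

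\textbf{Step 3: conclusion.} Once Step 2 excludes the other models, the classification leaves only $\lambda/\mu\in\R_{>0}$, which is the weighted Hopf flow. Example~\ref{section:siegellinear} shows that such flows do admit invariant tori (the Hopf tori $T_c$), so the lemma is sharp.
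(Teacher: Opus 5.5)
Your skeleton is the paper's: invoke the Brunella--Ghys classification with the explicit $S^3$ list of Geiges--Gonzalo. In the non-real diagonal and Poincar\'e--Dulac models, the only closed orbits are coordinate Hopf circles on which every other orbit accumulates, so an invariant torus must contain one, and those models are excluded. The paper treats that exclusion as evident from the explicit models. Your Step 2 is where you try to prove it, and it does not close.

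\textbf{Poincar\'e--Dulac case.} Here the holonomy is not a contraction. Your normal form also puts the resonant monomial in the wrong component; take $z\partial_z+(nw+z^n)\partial_w$. The separatrix $\{z=0\}$ then has multiplier $e^{2\pi i/n}$, of modulus one.
\begin{itemize}
\item For $n\ge3$ your tangent-line test applies.
\item For $n=2$ it applies once you add that the closed orbit is two-sided in $\Sigma$. Then $h$ preserves each half of the trace, which forces $k>0$.
\item For $n=1$, the Jordan block $z\partial_z+(w+z)\partial_w$, the holonomy is parabolic and no first-order argument can work. With the first integral $F=w/z-\log z$, the closures in $S^3$ of the level sets $\{\Re(w/z)-\log|z|=c\}$ are invariant embedded tori. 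They are $C^1$ but not $C^2$ along $\{z=0\}$: in the transversal coordinate $\xi=z/w$ the trace is $\Re\xi\sim(\Im\xi)^2\log|\Im\xi|$.
\end{itemize}
So the exclusion must use second-order smoothness of $\Sigma$. On a $C^2$ trace, $\Re(1/\xi)$ is bounded below on at least one half, so $\Re F\to+\infty$ there. But $\Re F$ is constant along the $h$-orbits accumulating at $0$ in that half, a contradiction.

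\textbf{Diagonal case, $\tau=\lambda/\mu\notin\R$.} Two-sidedness forces both multipliers to be positive: $e^{2\pi i\tau}$ at $C_1=\{z=0\}\cap S^3$ and $e^{2\pi i/\tau}$ at $C_2=\{w=0\}\cap S^3$. This happens exactly when $\tau\in i\R$. There the holonomies are linear, every invariant trace is a pair of opposite rays, and each circle genuinely carries smooth invariant annuli. So the ``second-order argument'' you propose cannot decide; no local argument at a single circle can.

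What is missing is a global argument joining the two circles:
\begin{itemize}
\item $\Sigma$ contains both $C_1$ and $C_2$, as the $\alpha$- and $\omega$-limits of any other orbit in $\Sigma$. They are disjoint essential closed orbits, hence parallel on $\Sigma$.
\item $\phi=\log z-\tau\log w$ maps $S^3\setminus(C_1\cup C_2)$ to the elliptic curve $\C/2\pi i(\mathbb Z+\tau\mathbb Z)$ and is constant on orbits.
\item Each annulus of $\Sigma\setminus(C_1\cup C_2)$ maps onto a single closed curve. The ray structure at $C_1$ forces that curve to be a closed geodesic of direction $1$, while the ray structure at $C_2$ forces direction $\tau$. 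This is a contradiction.
\end{itemize}
Your linking-number alternative cannot replace this. Two Hopf fibres on a Clifford torus are parallel $(1,1)$-curves that form a Hopf link, so the linking number alone obstructs nothing.
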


\begin{proof}
This is a direct consequence of the Brunella--Ghys classification of transversely holomorphic flows, together with the explicit $S^3$ models; see~\cite{BrunellaGhys1995,Ghys1996} and, for an explicit exhaustive list on $S^3$,~\cite[Theorem~4.9 and Section~6]{GeigesGonzalo2016}. In the explicit classification, the nonreal members of the continuous Poincar\'e family have only the two Hopf circles as compact leaves and every other leaf is asymptotic to them, while the discrete family has only one compact leaf and the remaining leaves have the corresponding asymptotic behavior. Neither case admits a compact invariant torus. The remaining real family is precisely the weighted Hopf family, whose leaves lie on the Hopf tori $\{|z_1|=\mathrm{const.}\}$.
\end{proof}

\begin{corollary}\label{cor:many-tori}
If a nonsingular transversely holomorphic flow on $S^3$ admits one compact embedded invariant surface, then it is transversely holomorphically conjugate to a weighted Hopf flow. Consequently it admits a one-parameter family of invariant tori filling the complement of two distinguished closed orbits.
\end{corollary}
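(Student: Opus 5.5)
The corollary is a formal combination of Lemma~\ref{lem:compact-surface} and Lemma~\ref{lem:torus-hopf}, followed by an explicit look at the weighted Hopf model.

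\emph{Step 1.} Let $\Sigma\subset S^3$ be the given compact embedded invariant surface. The flow is nonsingular, so Lemma~\ref{lem:compact-surface} shows that $\Sigma$ is diffeomorphic to $T^2$. Hence $\mathcal L$ has an invariant embedded two-torus.

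\emph{Step 2.} Lemma~\ref{lem:torus-hopf} now applies and gives a transversely holomorphic conjugacy $h$ from $\mathcal L$ to a weighted Hopf flow. This flow is generated on $S^3$ by $\lambda\,\pd_{\theta_z}+\mu\,\pd_{\theta_w}$ with $\lambda,\mu>0$. Normalize the sphere to $S^3(1)$.

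\emph{Step 3.} In the model, the function $|z|^2$ is invariant, because the flow only rotates the arguments of $z$ and $w$. Its level sets $T_c=\{|z|^2=c,\ |w|^2=1-c\}$ with $0<c<1$ are therefore invariant embedded tori. They fill the complement of the two Hopf circles $\{z=0\}\cap S^3$ and $\{w=0\}\cap S^3$. These two circles are closed orbits: on $\{z=0\}$ the flow is the rotation $\mu\,\pd_{\theta_w}$, and on $\{w=0\}$ it is $\lambda\,\pd_{\theta_z}$.

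\emph{Step 4.} The preimages $h^{-1}(T_c)$ are invariant embedded tori of $\mathcal L$. They depend smoothly on $c$ and fill the complement of the two closed orbits $h^{-1}(\{z=0\})$ and $h^{-1}(\{w=0\})$. This gives the one-parameter family claimed.

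There is no real obstacle here. All of the substance lies in Lemma~\ref{lem:torus-hopf}, which is taken as given. The only point to check is that a conjugacy maps invariant tori to invariant tori and closed orbits to closed orbits, which is immediate.
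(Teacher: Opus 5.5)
Your proof is correct and follows the paper's argument. Lemma~\ref{lem:compact-surface} makes the invariant surface a torus, Lemma~\ref{lem:torus-hopf} gives the weighted Hopf model, and the tori $T_c$, $0<c<1$, fill the complement of the two Hopf circles; your Step~4, which transports these tori and circles back through the conjugacy, just spells out what the paper leaves implicit.
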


\begin{proof}
By Lemma~\ref{lem:compact-surface}, the invariant surface is a torus, and Lemma~\ref{lem:torus-hopf} applies. In the weighted Hopf model the invariant tori are
\[
 T_c=\{|z_1|^2=c,\ |z_2|^2=1-c\},\qquad 0<c<1,
\]
and they fill the complement of the two Hopf circles.
\end{proof}

\begin{corollary}[compact transversely holomorphic flows on $S^3$]\label{cor:compact-th}
If every leaf of a nonsingular transversely holomorphic flow on $S^3$ is compact, then the flow is transversely holomorphically conjugate to a rational weighted Hopf flow. Equivalently, after a constant rescaling of the generator, it has the model
\[
 p\frac{\pd}{\pd\theta_1}+q\frac{\pd}{\pd\theta_2},
 \qquad p,q\in\mathbb N,\quad \gcd(p,q)=1.
\]
In particular, it defines a Seifert fibration of $S^3$.
\end{corollary}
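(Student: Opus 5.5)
We will reduce Corollary~\ref{cor:compact-th} to Corollary~\ref{cor:many-tori} and then read off rationality from the explicit weighted Hopf model. The first step is to produce a compact embedded invariant surface. If every leaf is compact, pick any leaf $\gamma$, a closed orbit. We then need an invariant torus, and the natural candidate is the boundary of a small saturated tubular neighborhood of $\gamma$. For a flow all of whose orbits are closed on a compact $3$-manifold, Epstein's theorem on periodic flows on $3$-manifolds gives a Seifert fibration, so the orbit lengths are locally bounded. Saturating a small closed tube around a regular fiber then yields a compact invariant solid torus whose boundary is an embedded invariant torus. Alternatively, one can avoid citing Epstein and use only the transverse-holomorphic structure. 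The holonomy of $\gamma$ is a germ of holomorphic diffeomorphism $f$ of $(\C,0)$ all of whose orbits are finite, since nearby leaves are closed. A standard Baire argument shows $f$ has finite order, hence is holomorphically linearizable to a rotation $\zeta\mapsto e^{2\pi i k/n}\zeta$. The preimage of a small round circle $\{|\zeta|=\varepsilon\}$ then saturates to an invariant torus.

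Once this torus is in hand, Corollary~\ref{cor:many-tori} gives a transversely holomorphic conjugacy to a weighted Hopf flow $\lambda\partial_{\theta_1}+\mu\partial_{\theta_2}$ with $\lambda,\mu>0$. On a generic Hopf torus $T_c$ this is a linear flow with slope $\lambda/\mu$. Its leaves are closed exactly when $\lambda/\mu\in\mathbb Q$; otherwise every orbit on $T_c$ is dense, contradicting compactness of all leaves. Writing $\lambda/\mu=p/q$ in lowest terms and rescaling the generator by a positive constant gives the model $p\partial_{\theta_1}+q\partial_{\theta_2}$. Rescaling by a constant does not change the oriented foliation or the transverse structure. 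This model is the standard Seifert fibration of $S^3$ with at most two exceptional fibers of multiplicities $p$ and $q$, namely the two Hopf circles.

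The main obstacle is the first step, producing the invariant torus, since Lemma~\ref{lem:torus-hopf} requires one as input. I would first try the holonomy argument sketched above. Finite orbits of a holomorphic germ force finite order by Baire category: the sets $\{f^n=\mathrm{id}\}$ are analytic and cover a neighborhood, so one of them has interior, and hence $f^n\equiv\mathrm{id}$. Finite-order germs are linearizable by averaging. This route stays within the transversely holomorphic category and makes the Seifert conclusion a consequence rather than an input.
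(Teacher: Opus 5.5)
Your proof is correct if you use the Epstein route for the first step, but it is organized differently from the paper's proof.

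\textbf{Comparison of routes.} The paper never produces an invariant torus. It goes straight back to the Brunella--Ghys/Geiges--Gonzalo list already quoted in Lemma~\ref{lem:torus-hopf}. In that list the nonreal Poincar\'e family has exactly two compact leaves and the discrete family has exactly one. So ``every leaf compact'' places the flow in the real weighted Hopf family at once, and rationality and the Seifert structure are read off from the explicit models. You instead use Corollary~\ref{cor:many-tori} as a black box, which forces you to manufacture a compact invariant surface first. That costs an extra external input, Epstein's theorem, and with it the Seifert conclusion becomes something you assume on the way rather than derive. Your route is modular: it uses only a stated result of the paper, not the compact-leaf count of each family in the classification. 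The paper's route is shorter, since that count does all the work in one line. Your remaining steps are fine: irrational slope gives dense orbits on the Hopf tori $T_c$, and you correctly identify the two Hopf circles as exceptional fibers of multiplicities $p,q$.

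\textbf{Gap in the holonomy alternative.} As written, this alternative does not avoid Epstein. You claim that the holonomy germ $f$ of an arbitrary closed leaf $\gamma$ has all orbits finite, i.e.\ that a neighborhood of $0$ is covered by the sets $\{f^n=\mathrm{id}\}$. This does not follow from ``nearby leaves are closed''. A closed leaf through a point $\zeta$ of the transversal may follow $\gamma$ for a few turns, leave the tube around $\gamma$ (so some $f^k(\zeta)$ leaves the domain of $f$), wander through $S^3$, and come back from elsewhere; then $\zeta$ is not a periodic point of $f$. Excluding this is exactly local boundedness of periods, which is the content of Epstein's theorem and fails in higher dimensions (Sullivan's example).

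\textbf{Fix.} Run the Baire argument on $S^3$ instead of on the transversal. The sets $K_n=\{x:\ \phi_t(x)=x \text{ for some } t\in[1/n,n]\}$ are closed and cover $S^3$, so some $K_n$ has interior. Choose $\gamma$ through an interior point $p$ and a transversal disc $\Delta\subset K_n$ at $p$. By continuity of the flow on $[0,n]$, leaves through points of $\Delta$ near $p$ stay in a thin tube around $\gamma$ until they close. Hence they close after at most $K$ turns, where $K$ is bounded in terms of $n$ and the return time along $\gamma$. A neighborhood of $0$ is then covered by finitely many analytic sets $\{f^k=\mathrm{id}\}$, $k\le K$. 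One of them has interior, so $f^k=\mathrm{id}$ by the identity theorem. Your linearization-and-saturation step then produces the invariant torus.
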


\begin{proof}
By the classification quoted in Lemma~\ref{lem:torus-hopf}, the compact-leaf case belongs to the real weighted Hopf family. The explicit description in~\cite[Propositions~6.1 and~6.2]{GeigesGonzalo2016} shows that all leaves are closed exactly when the weight ratio is rational; the resulting foliation is a Seifert fibration.
\end{proof}

\begin{remark}
Corollaries~\ref{cor:many-tori} and~\ref{cor:compact-th} are recorded only for convenience; their content is contained in the established classification theory and the explicit $S^3$ models.
\end{remark}

\begin{lemma}[linear characteristic-flow filling]\label{lem:linear-filling}
Let $V\simeq S^1\times\D$ be a solid torus with boundary $T=\partial V$, and fix a real-analytic product identification whose boundary coordinates $(\theta,\varphi)$ are longitude--meridian coordinates. Suppose that, in these boundary coordinates, a nonsingular real-analytic foliation on $T$ is real-analytically conjugate by a boundary diffeomorphism extending to a real-analytic diffeomorphism of $V$ to the constant linear foliation generated by
\[
 a\frac{\pd}{\pd\theta}+b\frac{\pd}{\pd\varphi},\qquad a\ne0,
\]
where $\varphi$ is meridional. Then the boundary foliation admits a nonsingular transversely holomorphic extension to $V$. More precisely, it is the boundary of the suspension
\[
 V_\alpha=(\D\times[0,1])/((z,1)\sim(e^{i\alpha}z,0)),
 \qquad \alpha=2\pi\frac ba,
\]
and the core circle is a closed leaf with rotational holonomy $z\mapsto e^{i\alpha}z$.
\end{lemma}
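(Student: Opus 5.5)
The plan is to reduce to the model suspension by a direct construction, using the boundary conjugacy hypothesis to transport everything. First, by hypothesis there is a real-analytic diffeomorphism $\Theta$ of $V$ whose restriction to $T$ conjugates the given boundary foliation to the constant linear foliation generated by $a\,\partial_\theta+b\,\partial_\varphi$. Pulling back by $\Theta$ replaces $V$ by the model solid torus $S^1\times\D$, with $T=S^1\times\partial\D$, and replaces the boundary foliation by the linear one. A transversely holomorphic extension of the linear model then pulls back by $\Theta$ to the required extension, because the notion of a transversely holomorphic flow is invariant under diffeomorphisms: the transverse holomorphic atlas is transported along with the flow. So it suffices to treat the linear case.

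For the linear case I will write down the suspension explicitly. On $\D\times[0,1]$ take the vertical flow $\partial_t$; its leaves are $\{z\}\times[0,1]$ and the transverse coordinate $z$ is holomorphic. Gluing by $(z,1)\sim(e^{i\alpha}z,0)$ identifies transverse coordinates through the holomorphic rotation $z\mapsto e^{i\alpha}z$. The quotient $V_\alpha$ therefore carries a nonsingular flow with a transversely holomorphic structure, whose charts are $z$ and its rotates. The core $\{0\}\times[0,1]/\sim$ is a closed leaf, and its first-return map on the transversal $\D\times\{0\}$ is exactly the rotation, so its holonomy is $z\mapsto e^{i\alpha}z$.

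It remains to identify the boundary of $V_\alpha$ with the linear model. Parametrize $\partial V_\alpha$ by $\theta=2\pi t$ and $\varphi=\arg z$, after the twisted identification $\varphi\mapsto\varphi-\alpha t$. This untwists the gluing, and $\partial V_\alpha$ becomes the standard torus. The vertical flow lines become lines of slope $d\varphi/d\theta=\alpha/2\pi=b/a$, that is, the foliation generated by $a\,\partial_\theta+b\,\partial_\varphi$, with $a\neq0$ ensuring transversality to the meridian disc. The untwisting map $(t,z)\mapsto(2\pi t,e^{-i\alpha t}z)$ extends real-analytically over the whole solid torus and is compatible with the product longitude--meridian identification. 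It is not holomorphic in $z$ for fixed $t$ in the naive sense, but it is holomorphic (a rotation) on each transversal disc, so the transverse structure is preserved.

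The main obstacle is the bookkeeping of the framing: checking that the meridian and longitude of the product identification correspond to those of $V_\alpha$, and that the sign and normalization of $\alpha=2\pi b/a$ match. I will check this by computing the return map of the linear boundary flow on the meridian circle $\theta=0$, which is the rotation by $2\pi b/a$, and matching it with the restriction of the suspension rotation to $\partial\D$.
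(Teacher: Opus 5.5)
Your proposal is correct and follows essentially the paper's argument: compute the first-return map on the meridian $\{\theta=0\}$ as the rotation by $2\pi b/a$, suspend the holomorphic rotation $z\mapsto e^{i\alpha}z$ to obtain a transversely holomorphic vertical flow whose core is a closed leaf, and pull the result back by the extended conjugacy. Your explicit untwisting diffeomorphism makes precise the identification of $\partial V_\alpha$ with the linear model, which the paper leaves implicit; note one sign slip, since to descend through $(z,1)\sim(e^{i\alpha}z,0)$ and give slope $+b/a$ that map must be $(t,z)\mapsto(2\pi t,e^{i\alpha t}z)$ rather than $(2\pi t,e^{-i\alpha t}z)$.
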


\begin{proof}
The circle $\{\theta=0\}$ is a global transversal for the linear boundary flow. After one turn in the longitudinal coordinate, the meridional coordinate changes by $2\pi b/a$, so the first-return map is the rigid rotation $e^{i\varphi}\mapsto e^{i(\varphi+2\pi b/a)}$. Suspend the holomorphic disk automorphism $z\mapsto e^{i\alpha}z$. The vertical foliation of $V_\alpha$ has holomorphic transverse coordinate $z$, and its only transverse gluing map is the holomorphic rotation $z\mapsto e^{i\alpha}z$. Hence it is transversely holomorphic. The fixed point $z=0$ suspends to the core closed leaf. Pulling the construction back by the assumed real-analytic extension of the boundary conjugacy to $V$ gives the asserted filling.
\end{proof}

\begin{corollary}[periodic filling]\label{cor:periodic-filling}
Under the hypotheses of Lemma~\ref{lem:linear-filling}, if $b/a\in\mathbb Q$, then the core holonomy is finite. In particular, if an invariant torus $T\subset S^3$ splits $S^3=V_1\cup_TV_2$ into two solid tori and the induced linear boundary foliations admit compatible fillings on both sides, then the resulting global transversely holomorphic flow on $S^3$ is of rational weighted-Hopf type, hence a Seifert fibration.
\end{corollary}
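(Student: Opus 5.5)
The plan splits along the two assertions of Corollary~\ref{cor:periodic-filling}.

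\emph{Finite core holonomy.} By Lemma~\ref{lem:linear-filling}, the core circle of $V_\alpha$ is a closed leaf. Its holonomy is the rotation $z\mapsto e^{i\alpha}z$ with $\alpha=2\pi b/a$. Write $b/a=k/n$ in lowest terms with $n\ge1$. Then the $n$-th iterate of the rotation is $z\mapsto e^{2\pi i k}z=z$. So the holonomy group is cyclic of order $n$, hence finite. Moreover every orbit of the rotation on $\D\setminus\{0\}$ is finite. Therefore every leaf of the suspension foliation of $V_\alpha$ is a closed circle that winds $n$ times around the core.

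\emph{Global flow on $S^3$.} Suppose $S^3=V_1\cup_T V_2$ and that both sides carry fillings from Lemma~\ref{lem:linear-filling} inducing the same linear foliation on $T$. The argument then runs in four steps.

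\begin{enumerate}
\item Glue the two fillings along $T$. The word ``compatible'' is taken to mean that the real-analytic product identifications and the boundary conjugacies agree on $T$. Then the vertical foliations on $V_1$ and $V_2$ define a global nonsingular flow on $S^3$.
\item Check that the glued flow is transversely holomorphic. In the interiors, the transverse holomorphic atlases come from the suspensions. Along $T$ they must agree as transverse structures; this is also part of the compatibility hypothesis.
\item Show that every leaf is compact. On $T$ itself, the boundary flow is linear with rational slope with respect to one side's longitude--meridian coordinates. In $S^3$ the meridian of one solid torus is the longitude of the other, so the slope is rational from the other side as well. Hence the first step applies on both sides, and every leaf in $V_1$, in $V_2$ and on $T$ is closed.
\item Conclude using Corollary~\ref{cor:compact-th}. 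It gives that the flow is transversely holomorphically conjugate to a rational weighted Hopf flow $p\,\pd_{\theta_1}+q\,\pd_{\theta_2}$, and hence defines a Seifert fibration.
\end{enumerate}

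The main obstacle is the gluing, in the first two steps. Lemma~\ref{lem:linear-filling} produces each filling only up to the given boundary conjugacy, and nothing forces the two transverse holomorphic structures to match across $T$ automatically. I would handle this by building compatibility into the hypothesis, as the statement phrases it, and then checking that the transition maps across $T$ are holomorphic in the transverse coordinate. If one prefers to avoid the transverse-holomorphic classification, there is an alternative for the Seifert conclusion alone. Since every orbit is periodic, the generating vector field can be rescaled so that its flow defines an $S^1$-action on $S^3$. By Epstein's theorem this action is a Seifert fibration, and the fibrations of $S^3$ are the weighted Hopf ones.
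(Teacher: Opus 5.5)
Your proposal is correct and follows essentially the paper's route: the rotation $z\mapsto e^{i\alpha}z$ has finite order, the compatibility hypothesis is used to glue the two fillings, and the conclusion comes from the Brunella--Ghys classification. The only difference is where you enter that classification. You check that every leaf is closed, which is what forces your Step~3 check on the $V_2$ side, and then apply Corollary~\ref{cor:compact-th}. The paper instead applies Lemma~\ref{lem:torus-hopf} directly to the invariant torus $T$, and uses the closed leaves on $T$ to exclude the irrational weighted-Hopf case, so it never needs to examine the second solid torus.
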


\begin{proof}
If $b/a=q/p$ in lowest terms, then the disk rotation has order $p$. Compatible fillings give a nonsingular transversely holomorphic flow on $S^3$ with invariant torus $T$. Lemma~\ref{lem:torus-hopf} gives weighted-Hopf dynamics, and the rationality of the rotation number gives the rational weighted-Hopf, equivalently Seifert, case.
\end{proof}

\section{General Geometry of Smooth Tangency Tori}
The preceding rigidity results concern situations in which the geometry of the tangency set forces linearization. We conclude by considering a smooth tangency torus without imposing those global rigidity hypotheses. The rank-one identity of Lemma~\ref{lem:spherical-rank}, used earlier to prove automatic spectral alignment in the spherical theorem, is also the starting point here. The principal result of this section shows that periodicity of the characteristic foliation forces a stronger local rigidity: the characteristic dynamics can be desingularized across the torus and both the holomorphic and real return holonomies are trivial.

\begin{theorem}[periodic tangency-torus rigidity]\label{thm:periodic-torus-rigidity}
Let $\F$ be a one-dimensional holomorphic foliation regular in a neighborhood of $S^3(r)\subset\C^2$, and let $T$ be a smooth embedded two-torus which is a connected component of the complex tangency locus $M(\F,S^3(r))$. Assume that the characteristic foliation induced on $T$ is a circle fibration. Then:
\begin{enumerate}
\item on a neighborhood of $T$ in $S^3(r)$, the characteristic line field on $S^3(r)\setminus T$ extends to a nonsingular real-analytic vector field $Y$ tangent to the real two-planes underlying the complex leaves of $\F$; the field $Y$ leaves $T$ invariant and restricts on $T$ to its characteristic circle foliation;
\item for every characteristic circle $\gamma\subset T$, the holomorphic holonomy germ of $\gamma$ in $\F$ is the identity;
\item the Poincar\'e return germ of $Y$ along every such $\gamma$ is the identity.
\end{enumerate}
\end{theorem}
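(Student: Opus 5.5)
The plan is to construct the desingularized field first, since items (2) and (3) will follow from it together with the periodicity of the characteristic foliation on $T$. Fix a holomorphic generator $X=P_1\pd_{z_1}+P_2\pd_{z_2}$ of $\F$ near $T$, and put $\rho=|z_1|^2+|z_2|^2-r^2$ and $g=X(\rho)=P_1\ol z_1+P_2\ol z_2$, as in Lemma~\ref{lem:spherical-rank}. For $c\in\C$, the real field $\Re(cX)$ is tangent to $S^3(r)$ exactly when $\Re(cg)=0$. Off $T$ the characteristic line is therefore spanned by $\Re(i\ol g X)$. This field vanishes on $T$, so it has to be divided by a defining function of $T$.

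By Lemma~\ref{lem:spherical-rank}, $g|_{S^3(r)}$ has real rank exactly one along $T$, and $T$ is a real-analytic hypersurface of $S^3(r)$. An embedded torus in $S^3$ is two-sided, so $T$ has a real-analytic defining function $f$ with $df\ne0$ on $T$. I would use real-analytic division to write $g=f\,u$, where $u$ is real-analytic and complex valued. Since $dg|_{TS^3}\ne0$ on $T$, the factor $u$ is nonvanishing near $T$. I then set
\[
 Y=\Re\bigl(i\,\ol u\,X\bigr).
\]
This field is tangent to the sphere, because $\Re(i\ol u g)=f\,\Re(i|u|^2)=0$. It lies in the real plane of the leaf, it is nonsingular because $u\ne0\ne X$, and off $T$ it is a nonvanishing multiple $f^{-1}$ of the old generator. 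This gives the extension claimed in (1).

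It remains to show that $Y$ is tangent to $T$, and I expect this to be the main obstacle. I would argue that along $T$ the leaf plane equals $T_p^{\C}S^3(r)$, and that $Y(p)$ is the limit of characteristic directions from both sides of $T$. The goal is to prove $Y(f)=0$ on $T$ by differentiating $g=fu$ along $Y$. I would use $\ol X(g)=\|X\|^2$ from Lemma~\ref{lem:spherical-rank}, which controls $Y(g)$ on $T$ in terms of $X(g)$ and $u$, to check that $Y(g)\in u\cdot\R$ there. Granting this, $T$ is $Y$-invariant. Moreover $Y|_T$ spans $TT\cap T^{\C}S^3(r)$, which is the characteristic line, so $Y$ restricts on $T$ to the characteristic circle foliation.

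For (2), I fix a characteristic circle $\gamma$, a complex transversal $\Delta$ to $\F$ at a point $p\in\gamma$, and a short real-analytic arc $\sigma\subset T$ through $p$ transverse to $\gamma$. The leaf projection $\pi$ along plaques sends $\sigma$ to a nonconstant real-analytic arc in $\Delta$, because $\sigma$ is transverse to the characteristic curves, which are the traces of the leaves on $T$. Since the characteristic foliation of $T$ is a circle fibration, following $\gamma$ around returns each point of $\sigma$ to the same leaf plaque. Hence the holonomy germ of $\gamma$ fixes $\pi(\sigma)$ pointwise. A holomorphic germ that is the identity on a nonconstant real arc is the identity, which proves (2).

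For (3), I choose a two-dimensional transversal $\Sigma\subset S^3(r)$ to $Y$ at $p$. The $Y$-orbits lie in leaves, so the return point $R(x)$ of $x\in\Sigma$ lies on the leaf through $x$. By (2) it lies in the same local plaque of $x$. Near $p$, each plaque meets $\Sigma$ in a single point, because the plaque cuts $S^3(r)$ locally in one $Y$-orbit, and this is the nonsingularity provided by (1). Therefore $R(x)=x$, which proves (3).
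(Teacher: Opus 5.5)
Your construction of $Y=\Re(i\ol u X)$ from the division $g=fu$ is the paper's, and your argument for (2) is essentially the paper's too. Projecting a transverse arc of $T$ into $\Delta$ is, if anything, cleaner than intersecting $T$ with $\Delta$.

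\textbf{Invariance of $T$.} The genuine gap is the $Y$-invariance of $T$, which you flag as the main obstacle but do not prove. The criterion you aim for, $Y(g)\in u\cdot\R$ on $T$, is vacuous. Since $g=fu$ and $f=0$ on $T$, one has $dg=u\,df$ on $T_pS^3(r)$ for $p\in T$. Hence $\xi(g)\in u\cdot\R$ for \emph{every} vector $\xi$ tangent to the sphere there, and nothing follows about $Y(f)$. The fact that $Y(p)$ is a limit of characteristic directions does not help either, because off $T$ those directions are not tangent to the level sets of $f$.

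What you need is $Y(g)=0$ on $T$. The missing input is to apply the reality of $dg(\xi)/u$ to \emph{both} tangent vectors $\xi=\Re X$ and $\xi=\Im X$ at points of $T$. With $A=Xg$ and $S=\ol Xg=\|X\|^2$, this gives $(A+S)/u\in\R$ and $(A-S)/(iu)\in\R$, hence $A/u=\ol{S/u}$, that is,
\[
 Xg=\frac{u}{\ol u}\,\|X\|^2\quad\text{on }T.
\]
Then $Y(g)=\tfrac{i}{2}\bigl(\ol u\,Xg-u\,\ol Xg\bigr)=0$, so $u\,Y(f)=0$ and therefore $Y(f)=0$.

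\textbf{Part (3).} This part rests on a false claim: near $p$, a plaque does not in general meet $S^3(r)$ in a single $Y$-orbit. On the leaf $L$ through $p$, $\rho|_L$ is strictly subharmonic and vanishes together with its differential along the characteristic arc $L\cap T$. So it has a transversally nondegenerate minimum there, and nearby plaques typically meet $S^3(r)$ in two arcs, one on each side of $T$.

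Already for $z\pd_z-w\pd_w$ on $S^3(\sqrt2)$, where $T=\{|z|=|w|=1\}$, this happens. The plaque $zw=c$ with $0<c<1$ close to $1$ meets the transversal $\{\arg z=0\}$ near $(1,1)$ in the two points with $|z|^2=1\pm\sqrt{1-c^2}$. Thus $F(R(x))=F(x)$ alone does not give $R(x)=x$.

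To conclude, you must also use that $R$ fixes $\Sigma\cap T$ and preserves each side of $T$ in $\Sigma$. Both facts follow from the invariance of $T$, which you have not yet established. You also need injectivity of $F|_\Sigma$ on each side near $p$, which is what the fold structure just described provides. The paper phrases this step via $F|_\Sigma$ being a local diffeomorphism off $T$, followed by the real-analytic identity principle.
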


\begin{proof}
\emph{Step 1: desingularization.}
Let
\[
 X=P_1(z)\frac{\pd}{\pd z_1}+P_2(z)\frac{\pd}{\pd z_2}
\]
be a holomorphic vector field generating $\F$ near $T$, and put
\[
 \rho=|z_1|^2+|z_2|^2-r^2,\qquad
 g=X(\rho)|_{S^3(r)}=P_1\ol z_1+P_2\ol z_2.
\]
By Lemma~\ref{lem:spherical-rank},
\begin{equation}\label{eq:barXg}
 \ol X(g)=|P_1|^2+|P_2|^2=\|X\|^2>0
\end{equation}
along $T$, and the real differential of $g:S^3(r)\to\C$ has rank one there.

Although $T$ was assumed only smooth, this rank-one property forces it to be real analytic. Indeed, near each $p\in T$ one real component of the real-analytic map $g:S^3(r)\to\C$ has nonzero differential, so its zero set is a real-analytic surface containing $T$. Since both have real dimension two, the inclusion of $T$ into that surface is locally open; hence $T$ agrees locally with a real-analytic hypersurface. Thus $T$ is a compact real-analytic two-sided hypersurface of $S^3(r)$. By the real-analytic tubular-neighborhood theorem, after shrinking around $T$ there is a global real-analytic defining function $\sigma$ on a neighborhood $U$ such that
\[
 T=\{\sigma=0\},\qquad d\sigma\ne0.
\]
Since $g$ vanishes precisely to first order in the normal direction, real-analytic division gives
\begin{equation}\label{eq:g-sigma-h}
 g=\sigma h
\end{equation}
with $h:U\to\C$ real analytic and nowhere zero. Define
\begin{equation}\label{eq:Ydesing}
 Y=\Re(i\ol h\,X)|_{S^3(r)}.
\end{equation}
Then $Y$ is real analytic and nonsingular on $U$. Moreover,
\[
 Y(\rho)=\Re(i\ol h\,X(\rho))
          =\Re(i\sigma|h|^2)=0,
\]
so $Y$ is tangent to $S^3(r)$, and by construction it is tangent to the real plane underlying the complex leaf of $\F$. Off $T$, the standard characteristic generator
\[
 V=\Re(i\ol g\,X)
\]
satisfies $V=\sigma Y$, so $Y$ extends the same characteristic line field.

It remains to prove that $T$ is $Y$-invariant. On $T$, equation~\eqref{eq:g-sigma-h} gives
\[
 dg(\xi)=h\,d\sigma(\xi),\qquad \xi\in TS^3(r),
\]
and hence $dg(\xi)/h\in\R$. Since at a tangency point both $\Re X$ and $\Im X$ are tangent to the sphere, put
\[
 A=Xg,\qquad S=\ol Xg=\|X\|^2>0.
\]
Applying the preceding reality condition to $\Re X$ and $\Im X$ yields
\[
 \frac{A+S}{h}\in\R,
 \qquad
 \frac{A-S}{ih}\in\R.
\]
If $a=A/h$ and $b=S/h$, these say that $a+b$ is real and $a-b$ is purely imaginary; hence $a=\ol b$. Since $S$ is real,
\begin{equation}\label{eq:Xg-relation}
 A=\frac{h}{\ol h}\,S.
\end{equation}
Therefore, on $T$,
\[
 Y(g)=\frac{i}{2}(\ol h\,Xg-h\,\ol Xg)
     =\frac{i}{2}\left(\ol h\,\frac{h}{\ol h}S-hS\right)=0.
\]
Using $g=\sigma h$ and $\sigma=0$ on $T$, we obtain
\[
 0=Y(g)=h\,Y(\sigma),
\]
so $Y(\sigma)=0$ on $T$. Thus $Y$ is tangent to $T$, and its restriction there is precisely the characteristic line field.

\smallskip
\noindent\emph{Step 2: trivial holonomy.}
Fix $p\in\gamma$ and a small complex transversal $\Delta$ to the complex leaf of $\F$ through $\gamma$. Since $T$ is foliated by characteristic circles, $I=T\cap\Delta$ is, after shrinking, a real-analytic transverse arc and nearby points of $I$ lie on nearby characteristic circles of the same circle fibration. The holonomy germ $h_\gamma:(\Delta,p)\to(\Delta,p)$ associated with one turn around $\gamma$ therefore fixes every point of $I$. By the one-variable holomorphic identity theorem,
\[
 h_\gamma\equiv\operatorname{id}.
\]
Trivial holonomy around $\gamma$ gives, after shrinking to a foliated tubular neighborhood of $\gamma$, a single-valued holomorphic transverse coordinate
\[
 F:W\longrightarrow\Delta
\]
constant on the plaques of $\F$.

Let $\Sigma\subset S^3(r)$ be a small real-analytic Poincar\'e section to the nonsingular field $Y$ through $p$, and let $P:(\Sigma,p)\to(\Sigma,p)$ be the return germ corresponding to one turn around $\gamma$. Because $Y$ is tangent to the complex leaves,
\[
 F\circ P=F.
\]
On either side of $T$, the holomorphic foliation is transverse to $S^3(r)$; hence $F|_\Sigma$ has real rank two there and is locally a real-analytic diffeomorphism. It follows that $P=\operatorname{id}$ on a nonempty open subset of one side of $\Sigma\setminus(\Sigma\cap T)$. Since $P$ is real analytic, the real-analytic identity principle gives
\[
 P\equiv\operatorname{id}
\]
as a germ at $p$. In particular $DP_p=I$.
\end{proof}

\begin{corollary}[local TH structure in the periodic case]\label{cor:local-th-periodic}
Under the hypotheses of Theorem~\ref{thm:periodic-torus-rigidity}, there exists, after shrinking around $T$, a saturated neighborhood
\[
 U\subset S^3(r)
\]
of $T$ on which the desingularized field $Y$ defines a regular real-analytic circle fibration. In particular, the orbit foliation of $Y|_U$ admits a transversely holomorphic structure extending across $T$.
\end{corollary}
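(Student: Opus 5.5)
The plan is to build the saturated neighborhood directly from the conclusions of Theorem~\ref{thm:periodic-torus-rigidity}. By part (1) of that theorem, $Y$ is a nonsingular real-analytic vector field on a neighborhood of $T$ in $S^3(r)$. It leaves $T$ invariant, and on $T$ its orbits are the closed characteristic circles. For each such circle $\gamma$, part (3) says the Poincar\'e return germ of $Y$ along $\gamma$ is the identity. So all $Y$-orbits starting on a small section $\Sigma_p$ through $p\in\gamma$ close up after roughly one turn, with return time depending real-analytically on the starting point. The first goal is to cover $T$ by finitely many such flow boxes around characteristic circles and glue them. The union $U$ of the resulting closed orbits is then an open, $Y$-saturated neighborhood of $T$ in which every orbit is a circle.

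Next I will show that $Y|_U$ is a regular circle fibration. The return time $\tau$ is real analytic and positive, and it equals the period on $T$. Rescaling, I will set $\tilde Y=(\tau/\tau_0)^{-1}Y$, or normalize locally so that every orbit has period $1$; this turns the flow into a free real-analytic $S^1$-action on $U$. Freeness needs care: the trivial return germ gives first return equal to the identity at the first crossing. So no orbit near $\gamma$ winds several times before closing, and there are no exceptional (Seifert) fibers near $T$. By the slice theorem, or simply by the product charts $\Sigma_p\times S^1$, the orbit map $U\to U/S^1$ is then a real-analytic locally trivial circle bundle. Shrinking $U$ to a saturated tubular neighborhood, the base is an annulus transverse to $T$ crossed with a circle, and $U\simeq T^2\times(-\varepsilon,\varepsilon)$.

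For the transversely holomorphic structure, the first step is to define a complex structure on the two-dimensional orbit space $U/S^1$; any orientable real-analytic surface admits one. Pulling the local holomorphic coordinates back along the fibration gives foliation charts whose transition maps are biholomorphic in the transverse variable. I will also record the more natural choice away from $T$: there the coordinate $F$ from Step~2 is a local diffeomorphism of sections, but it degenerates on $T$, as the linear Siegel example shows. This is why the structure is taken from the base of the fibration and not from $\F$.

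I expect the main obstacle to be the gluing in the second paragraph. Part (3) only gives the return germ along each circle separately, so I must check that the return times, and hence the normalized $S^1$-actions, patch consistently across overlapping flow boxes. A compactness argument on $T$ should give a uniform size of sections. The point is that the period function is determined intrinsically by the orbit, namely the minimal positive return time. It is therefore globally well defined and continuous on $U$, and real analytic because it is locally given by the analytic return times.
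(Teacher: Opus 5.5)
Your proposal is correct and follows essentially the paper's route. Trivial return germs give product neighborhoods $\Sigma_p\times S^1$ of each characteristic circle; the paper cites local Reeb stability here, whereas you build these neighborhoods by hand and make freeness explicit through the intrinsic minimal-period function. Finitely many of them cover $T$, the union is a holonomy-free circle fibration over an orientable base (an annulus), and a complex structure on the base is pulled back.

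One small slip: multiplying $Y$ by an orbit-constant factor $f$ divides all periods by $f$. The normalization should therefore be $\tilde Y=(\tau/\tau_0)\,Y$ (or $\tau Y$ for period one), not $(\tau/\tau_0)^{-1}Y$.
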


\begin{proof}
By Theorem~\ref{thm:periodic-torus-rigidity}, every characteristic circle $\gamma\subset T$ has trivial Poincar\'e return germ for $Y$. The local stability theorem for a compact leaf with finite holonomy therefore gives, for each such $\gamma$, a saturated neighborhood $U_\gamma$ in which every leaf of $Y$ is a circle and the foliation is locally a product
\[
 S^1\times D^2\longrightarrow D^2.
\]
Since $T$ is compact, finitely many such neighborhoods cover it. After shrinking their union, we obtain a saturated neighborhood $U$ of $T$ all of whose leaves are compact circles. The holonomy is trivial near $T$, so the local leaf spaces glue without orbifold isotropy and define a real-analytic surface $B$ for which the quotient map
\[
 \pi:U\longrightarrow B
\]
is a real-analytic circle fibration. Since $T$ is two-sided in $S^3(r)$, $U$ may be chosen inside a tubular neighborhood diffeomorphic to $T^2\times(-\varepsilon,\varepsilon)$.

The oriented normal bundle of the oriented flow $Y$ is of real rank two, hence the local leaf space $B$ is oriented. Choose a real-analytic conformal structure on $B$; equivalently, regard $B$ locally as a Riemann surface. Pulling back its complex coordinate charts by $\pi$ gives transverse complex coordinates for the orbit foliation of $Y|_U$, with holomorphic transition functions. Thus $Y|_U$ is transversely holomorphic across the tangency torus.
\end{proof}

\begin{remark}\label{rem:periodic-local-model}
Corollary~\ref{cor:local-th-periodic} gives a local resolution of the transverse-holomorphic degeneracy at a periodic tangency torus. The transverse holomorphic structure naturally inherited from the ambient holomorphic foliation need not extend through $T$, but the desingularized real characteristic flow nevertheless carries an alternative transverse holomorphic structure on a full collar of $T$. No global compatibility with the natural transverse structures on the two complementary regions of the sphere is asserted here.
\end{remark}

\bigskip
\noindent
Toshikazu Ito\\
Professor Emeritus, Ryukoku University\\
67 Tsukamoto-cho, Fukakusa, Fushimi-ku, Kyoto 612-8577, Japan

\medskip
\noindent
Bruno Sc\'ardua\\
Instituto de Matem\'atica, Universidade Federal do Rio de Janeiro\\
Av. Athos da Silveira Ramos, 149, Centro de Tecnologia, Bloco C\\
Cidade Universit\'aria, Rio de Janeiro, RJ 21941-909, Brazil\\
E-mail: \texttt{bruno.scardua@gmail.com}

\end{document}